\documentclass[12pt,reqno]{amsart}

\usepackage{amsmath, amssymb, amsthm}

\begin{document}

 \bibliographystyle{plain}
 \newtheorem{theorem}{Theorem}
 \newtheorem{lemma}[theorem]{Lemma}
 \newtheorem{corollary}[theorem]{Corollary}
 \newtheorem{problem}[theorem]{Problem}
 \newtheorem{conjecture}[theorem]{Conjecture}
 \newtheorem{definition}[theorem]{Definition}
 \newtheorem{prop}[theorem]{Proposition}
 \numberwithin{equation}{section}
 \numberwithin{theorem}{section}

 \newcommand{\mo}{~\mathrm{mod}~}
 \newcommand{\mc}{\mathcal}
 \newcommand{\rar}{\rightarrow}
 \newcommand{\Rar}{\Rightarrow}
 \newcommand{\lar}{\leftarrow}
 \newcommand{\lrar}{\leftrightarrow}
 \newcommand{\Lrar}{\Leftrightarrow}
 \newcommand{\zpz}{\mathbb{Z}/p\mathbb{Z}}
 \newcommand{\mbb}{\mathbb}
 \newcommand{\B}{\mc{B}}
 \newcommand{\cc}{\mc{C}}
 \newcommand{\D}{\mc{D}}
 \newcommand{\E}{\mc{E}}
 \newcommand{\F}{\mathbb{F}}
 \newcommand{\G}{\mc{G}}
  \newcommand{\ZG}{\Z (G)}
 \newcommand{\FN}{\F_n}
 \newcommand{\I}{\mc{I}}
 \newcommand{\J}{\mc{J}}
 \newcommand{\M}{\mc{M}}
 \newcommand{\nn}{\mc{N}}
 \newcommand{\qq}{\mc{Q}}
 \newcommand{\PP}{\mc{P}}
 \newcommand{\U}{\mc{U}}
 \newcommand{\X}{\mc{X}}
 \newcommand{\Y}{\mc{Y}}
 \newcommand{\itQ}{\mc{Q}}
 \newcommand{\sgn}{\mathrm{sgn}}
 \newcommand{\C}{\mathbb{C}}
 \newcommand{\R}{\mathbb{R}}
 \newcommand{\T}{\mathbb{T}}
 \newcommand{\N}{\mathbb{N}}
 \newcommand{\Q}{\mathbb{Q}}
 \newcommand{\Z}{\mathbb{Z}}
 \newcommand{\A}{\mathcal{A}}
 \newcommand{\ff}{\mathfrak F}
 \newcommand{\fb}{f_{\beta}}
 \newcommand{\fg}{f_{\gamma}}
 \newcommand{\gb}{g_{\beta}}
 \newcommand{\vphi}{\varphi}
 \newcommand{\whXq}{\widehat{X}_q(0)}
 \newcommand{\Xnn}{g_{n,N}}
 \newcommand{\lf}{\left\lfloor}
 \newcommand{\rf}{\right\rfloor}
 \newcommand{\lQx}{L_Q(x)}
 \newcommand{\lQQ}{\frac{\lQx}{Q}}
 \newcommand{\rQx}{R_Q(x)}
 \newcommand{\rQQ}{\frac{\rQx}{Q}}
 \newcommand{\elQ}{\ell_Q(\alpha )}
 \newcommand{\oa}{\overline{a}}
 \newcommand{\oI}{\overline{I}}
 \newcommand{\dx}{\text{\rm d}x}
 \newcommand{\dy}{\text{\rm d}y}
\newcommand{\cal}[1]{\mathcal{#1}}
\newcommand{\cH}{{\cal H}}
\newcommand{\diam}{\operatorname{diam}}
\newcommand{\bx}{\mathbf{x}}

\parskip=0.5ex

\title[Codimension one cut-and-project nets]{Equivalence classes of codimension one cut-and-project nets}
\author{Alan~Haynes}
\thanks{Research supported by EPSRC grants EP/J00149X/1 and EP/L001462/1.}
\address{Department of Mathematics, University of York, York, UK}
\email{alan.haynes@york.ac.uk}

\allowdisplaybreaks

\begin{abstract}
We prove that in any totally irrational cut-and-project setup with codimension (internal space dimension) one, it is possible to choose sections (windows) in non-trivial ways so that the resulting sets are bounded displacement to lattices. Our proof demonstrates that for any irrational $\alpha$, regardless of Diophantine type, there is a collection of intervals in $\R/\Z$ which is closed under translation, contains intervals of arbitrarily small length, and along which the discrepancy of the sequence $\{n\alpha\}$ is bounded above uniformly by a constant.
\end{abstract}

\maketitle

\section{Introduction}
A {\em separated net} $Y$ in $\R^d$ is a set for which there exist constants $r,R>0$ such that for any distinct points $y,y'\in Y$, the distance from $y$ to $y'$ is at least $r$, and for any $x\in\R^d$, the ball of radius $R$ centered at $x$ contains at least one point of $Y$. Separated nets (also called Delone sets) occur as prominent features in the theories of quasi-periodic functions and mathematical quasicrystals. For surveys of these connections the reader is encouraged to read \cite{BaakMood2004,Meye1995,Mood2008}.

Attempting to understand the deformation properties of separated nets is an attractive venture, with potential mathematical and real world applications (see for example \cite{McMu1998}), which has been undertaken by a number of authors. We say that two separated nets $Y$ and $Y'$ are {\em bounded displacement equivalent} (or simply, BD) if there is a bijection $f:Y\rar Y'$ with the property that
\[\sup_{y\in Y}|f(y)-y|<\infty.\]
Equivalently, $Y$ and $Y'$ are BD to one another if the points of one set can be moved bijectively to the other, moving each point by at most some fixed constant amount. Similarly, we say that two separated nets $Y$ and $Y'$ are {\em bi-Lipschitz equivalent} (or simply, BL) if there are constants $c,C>0$ and a bijection $f:Y\rar Y'$ with the property that, for all pairs of distinct points $y_1,y_2\in Y$,
\[c<\frac{|f(y_1)-f(y_2)|}{|y_1-y_2|}<C.\]

It is elementary to check that BD and BL are equivalence relations on the collection of separated nets, and that two separated nets which are BD equivalent are also BL equivalent. An important and slightly less obvious fact is that any two lattices in $\R^d$ of the same covolume are BD equivalent (see \cite[Proposition 2.1]{HaynKellWeis2013}). It follows that every lattice in $\R^d$ is BL equivalent to $\Z^d$. Gromov asked (see \cite{BuraKlei2002} and \cite{McMu1998} for a history of the problem) whether every separated net in $\R^d$ is BL equivalent to $\Z^d$. This question was answered in 1998, independently by Burago and Kleiner \cite{BuraKlei1998} and McMullen \cite{McMu1998}, who demonstrated that there are separated nets which are not BL to a lattice. Subsequent to this discovery, much attention has been paid to understanding the BD and BL equivalence classes of an important subset of separated nets known as cut-and-project sets, which we now describe.

Suppose that $V$ is a $d-$dimensional subspace of $\R^k$, let $\pi:\R^k\rar\R^k/\Z^k$ be the canonical projection, and suppose that $\mc{S}\subseteq\R^k/\Z^k$ is the image under $\pi$ of an open, bounded subset of a $(k-d)-$dimensional plane in $\R^k$ which is everywhere transverse to $V$. For each $x\in\R^k$ define $Y=Y_{\mc{S},x}\subseteq V$ by
\[Y_{\mc{S},x}=\{v\in V: \pi (v+x)\in \mc{S}\}.\]
If $V$ is {\em totally irrational} (equivalently, if $\pi (V)$ is dense in $\R^k/\Z^k$) then $Y$ will be a separated net (see \cite[Section 2.2]{HaynKellWeis2013}). Technically $Y$ is a subset of $V$, but by making a choice of coordinates, which amounts to a linear transformation that does not change the BD or BL class of $Y$, we can think of $Y$ as a subset of $\R^d$. We refer to $Y_{\mc{S},x}$ as the {\em cut-and-project set} associated to $k,V,\mc{S},$ and $x$, and if it is a separated net then we will call $Y$ a {\em cut-and-project net}. Readers who are familiar with the more traditional definition of cut-and-project sets should not be concerned, as it is not difficult to verify that ours produces the same sets.

It is an open problem (see the introduction of \cite{BuraKlei2002}) to determine whether or not every cut-and-project net is BL to a lattice. Burago and Kleiner \cite{BuraKlei2002} proved that when $d=2$ and $k=3$, if $V$ satisfies a certain Diophantine condition then $Y$ will be BL to a lattice. Solomon \cite{Solo2011} proved that centers of tiles in Penrose tilings, which correspond to cut-and-project nets with $k=5$ and $d=2$, are in fact BD to a lattice.  Most recently, in \cite[Theorem 1.1]{HaynKellWeis2013} it was shown in greater generality that for any dimensions $d$ and $k$, with very mild assumptions on the sets $\mc{S}$, almost every subspace $V$ (in the sense of the natural measure on the Grassmannian manifold) satisfies a Diophantine condition which ensures that the corresponding separated net is BL to a lattice. For any $d$ and $k$ with $d\ge 2$, and with more restrictions on $\mc{S}$, it was proved in \cite[Theorem 1.2]{HaynKellWeis2013} that almost every $V$ satisfies a Diophantine condition ensuring that $Y$ is BD to a lattice. There, it was also proved that for almost every parallelotope $\mc{S}$, there is a non-empty set of subspaces $V$ for which the associated sets $Y$ are not BD to a lattice.

By contrast, in this paper we are going to show that when $d=k-1$, for any $V$ and $x$, there are always choices of $\mc{S}$ called {\em special intervals} for which the associated sets $Y$ are BD to a lattice.
\begin{theorem}\label{thm.BDDEquiv}
If $Y=Y_{\mc{S},x}$ is a cut-and-project set with $d=k-1$, and if $\mc{S}$ is a special interval for $V$, then there is a bijection $f$ from $Y$ to a lattice satisfying
\begin{equation}\label{eqn.BDconstant}
\sup_{y\in Y}|f(y)-y|\le K_V|\mc{S}|^{-1}.
\end{equation}
Here $K_V$ is a constant which may depend on $V$, but does not depend on any other parameters involved.
\end{theorem}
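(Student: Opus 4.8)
The plan is to translate the geometric statement into a one–dimensional combinatorial one and then to build the bijection $f$ by hand, fibre by fibre, directly from the defining property of a special interval; I do not expect to need a Laczkovich–type discrepancy criterion. First I would invoke the standard description of a codimension–one cut–and–project set: after the permitted choice of coordinates (an invertible linear map, whose norm I absorb into $K_V$) the data collapses to a single rotation, so that there are $\alpha=(\alpha_1,\dots,\alpha_{k-1})\in\R^{k-1}$ with $1,\alpha_1,\dots,\alpha_{k-1}$ linearly independent over $\Q$ (total irrationality of $V$), a phase $\beta\in\R$, and an interval $\Omega\subset\R/\Z$ with $|\Omega|\asymp_V|\mc{S}|$, such that
\[ Y=\{\,m\in\Z^{k-1}:\ \{\langle m,\alpha\rangle+\beta\}\in\Omega\,\}, \]
and ``$\mc{S}$ special for $V$'' means precisely that $\Omega$ is a special interval for the irrational $\alpha_{k-1}$. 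I take the target lattice to be $\Lambda=\Z^{k-2}\times(|\mc{S}|^{-1}\Z)$, a genuine lattice in $\R^{k-1}$, and transform back through the linear map at the very end.

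Next I fibre over the first $k-2$ coordinates. Writing $m=(m',n)$ with $m'\in\Z^{k-2}$ and $n\in\Z$, the point $(m',n)$ lies in $Y$ exactly when $\{n\alpha_{k-1}+\phi_{m'}\}\in\Omega$, where $\phi_{m'}=\langle m',(\alpha_1,\dots,\alpha_{k-2})\rangle+\beta$; total irrationality forces $\alpha_{k-1}$ irrational, so each fibre $Y_{m'}=\{n\in\Z:\{n\alpha_{k-1}+\phi_{m'}\}\in\Omega\}$ is bi-infinite, $Y=\bigsqcup_{m'}(\{m'\}\times Y_{m'})$, and $\Lambda=\bigsqcup_{m'}(\{m'\}\times|\mc{S}|^{-1}\Z)$ fibres compatibly.

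Listing $Y_{m'}=\{\dots<n_{-1}<n_0<n_1<\dots\}$ with $n_0$ the least nonnegative element, I set $f(m',n_i)=(m',\,i\,|\mc{S}|^{-1})$. This is a bijection of each fibre, hence a bijection $Y\to\Lambda$ that moves points only in the last coordinate, with displacement $\big|\,i\,|\mc{S}|^{-1}-n_i\,\big|$. Now $i+1$ equals the number of integers $n\in[0,n_i]$ with $\{n\alpha_{k-1}+\phi_{m'}\}\in\Omega$, and the defining property of a special interval is exactly that $\{n\alpha_{k-1}\}$ has bounded remainder on $\Omega$ uniformly over all phases --- this is where closure under translation is used, the phase $\phi_{m'}$ being absorbed by passing to the special interval $\Omega-\phi_{m'}$ --- so $\big|\,i-n_i|\Omega|\,\big|\le C_{\alpha_{k-1}}$ for every $i$ and $m'$ (the case $i<0$ is the same). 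Hence $\big|\,i\,|\mc{S}|^{-1}-n_i\,\big|\le C'_V|\mc{S}|^{-1}$, and transforming back through the linear map of the first step multiplies the displacement by a $V$–dependent constant, which gives \eqref{eqn.BDconstant}.

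The geometric reduction and the fibrewise bijection are the soft part. The real content, which I would isolate as a separate lemma and which is the statement highlighted in the abstract, is to produce, for an \emph{arbitrary} irrational $\alpha$ with no Diophantine hypothesis, a translation–invariant family of intervals --- the special intervals --- that contains arbitrarily short intervals and for which $\{n\alpha\}$ has bounded remainder with a single uniform constant. I expect this to proceed through the continued fraction expansion of $\alpha$, the three–distance theorem and Ostrowski renormalization, the delicate point being to keep the remainder bounded by an absolute constant even as the partial quotients of $\alpha$ blow up; the factor $|\mc{S}|^{-1}$ in \eqref{eqn.BDconstant} then enters only through the covolume of $\Lambda$, that is, through the spacing inside the fibres.
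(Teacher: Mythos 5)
Your proposal follows essentially the same route as the paper: fibre over $k-2$ of the integer coordinates, send the $i$-th element of each fibre to $i$ times the reciprocal of the window length, and control the displacement by the uniform bounded-remainder property of special intervals over all translates. The lemma you defer is exactly the paper's Theorem \ref{thm.ReturnTimes} (proved via Ostrowski expansions and a block-combinatorial analysis of return times), and the only cosmetic issue is that the lattice spacing in the reduced picture should be the reciprocal of the length of the reduced interval $\Omega$ (the paper's $|\J(m,0)|^{-1}$) rather than $|\mc{S}|^{-1}$, the difference being absorbed into the final $V$-dependent linear map.
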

Special intervals will be defined precisely in Section \ref{sec.ReturnTimes}. For now it suffices to know that they are a collection of intervals in $\R/\Z$, which depend on $V$, whose lengths can be taken to be arbitrarily small. As a corollary of Theorem \ref{thm.BDDEquiv} we immediately obtain the following result.
\begin{corollary}\label{cor.BDequiv}
If $Y=Y_{\mc{S},x}$ is a cut-and-project set with $d=k-1$, and if $\mc{S}$ is a finite disjoint union of special intervals, then $Y$ is BD to a lattice.
\end{corollary}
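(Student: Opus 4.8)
The plan is to reduce to Theorem~\ref{thm.BDDEquiv}. Write $\mc S=\mc S_1\sqcup\cdots\sqcup\mc S_m$ with each $\mc S_j$ a special interval for $V$. Since the windows are pairwise disjoint, the definition of $Y_{\mc S,x}$ immediately yields a decomposition into a disjoint union of cut-and-project nets,
\[
Y=Y_{\mc S,x}=\bigsqcup_{j=1}^m Y_{\mc S_j,x},
\]
and Theorem~\ref{thm.BDDEquiv} supplies, for each $j$, a lattice $\Lambda_j$ together with a bijection $f_j\colon Y_{\mc S_j,x}\to\Lambda_j$ with $\sup_y|f_j(y)-y|\le K_V|\mc S_j|^{-1}$. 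What remains is to assemble these into a single bounded displacement equivalence from $Y$ onto a lattice.

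This assembly step is the crux, and it cannot be done for an arbitrary finite disjoint union of separated nets that are individually bounded displacement to lattices: already for the union of two incommensurable lattices in $\R^d$ with $d\ge2$, no lattice target has the right density to control the point count in thin comb-shaped regions, so such a union need not be bounded displacement to any lattice. The additional structure we exploit is that every piece $Y_{\mc S_j,x}$ comes from one and the same totally irrational $V$. The idea is to pass through the Laczkovich-type description of bounded displacement to a lattice as a bounded-discrepancy condition on the point counts of the net; for a codimension-one cut-and-project net $Y_{I,x}$ this condition unwinds to the assertion that the indicator of the window $I$ has bounded discrepancy along the toral orbit associated with $V$ — which is exactly the property making $I$ a special interval. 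Discrepancy of this type is subadditive over disjoint unions of windows, so $\mc S=\bigsqcup_j\mc S_j$ inherits it; the construction behind Theorem~\ref{thm.BDDEquiv} then runs with $\mc S$ in place of a single special interval and outputs a bijection from $Y$ onto a lattice whose covolume is proportional to $|\mc S|^{-1}$.

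The main obstacle is to make the second paragraph precise: one must extract from the proof of Theorem~\ref{thm.BDDEquiv} the exact discrepancy estimate it consumes, confirm that this estimate is additive over a finite disjoint union of special intervals, and check that neither the construction of the bijection nor the bound \eqref{eqn.BDconstant} used connectedness of the window. Granting this, the corollary follows; in fact one recovers a quantitative version, with the displacement bound for $Y$ controlled by $\sum_{j=1}^m K_V|\mc S_j|^{-1}$.
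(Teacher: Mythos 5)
There is a genuine gap, and it sits exactly at the step you identify as ``the crux.'' The paper's own proof of the corollary is a one-line appeal to \cite[Proposition 2.4]{HaynKellWeis2013}, which is precisely the general assembly fact you claim is unavailable: a finite disjoint union of separated nets, each BD to a lattice, whose union is again separated, is BD to a lattice. Your purported counterexample is false. Every lattice $\Lambda$ of covolume $v$ satisfies Laczkovich's discrepancy criterion with density $1/v$ (this is forced by the \emph{necessity} direction of \cite[Theorem 1.1]{Lacz1992}, since $\Lambda$ is BD to $v^{1/d}\Z^d$ by \cite[Proposition 2.1]{HaynKellWeis2013}), and the criterion is manifestly additive over finite disjoint unions by the triangle inequality, with the densities adding. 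Hence the union of finitely many pairwise disjoint lattices in $\R^d$, incommensurable or not, \emph{is} BD to a lattice, and the same additivity argument proves the general assembly statement. So the hypothesis that all pieces come from the same $V$ is not needed for this step at all; your first paragraph plus the citation already finishes the proof.

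Having (incorrectly) rejected the clean route, you substitute a plan --- re-run the construction of Theorem \ref{thm.BDDEquiv} with the disconnected window $\mc{S}$ in place of a single special interval --- and then explicitly defer its verification (``the main obstacle is to make the second paragraph precise\ldots''). As written this leaves the corollary unproved. Moreover, the deferred step is not routine: the bijection in the proof of Theorem \ref{thm.BDDEquiv} is built fiber-by-fiber from the ordered return times to a \emph{single} interval $\J(m,\gamma)$, using the estimate $n_{I+M}-n_I=M|\J(m,\gamma)|^{-1}+O(|\J(m,\gamma)|^{-1})$ from (\ref{eqn.DifferenceOfReturnTimes}); for a union of special intervals with different parameters $m_j$ the return times interleave and no single such gap formula applies, so the construction does not simply ``run with $\mc{S}$ in place of a single special interval.'' The correct repair is either to restore the citation of \cite[Proposition 2.4]{HaynKellWeis2013}, or to carry out the Laczkovich-additivity argument you gesture at --- but applied to the nets $Y_{\mc{S}_j,x}$ (each of which satisfies the criterion because it is BD to a lattice), not by modifying the window in Theorem \ref{thm.BDDEquiv}.
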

We remark that Corollary \ref{cor.BDequiv} could be deduced from a theorem of Hecke and Ostrowski (see \cite{Heck1922}, \cite{Ostr1927/30}, or the more widely available \cite{Kest1966/67}, for a discussion of their result), after the initial reductions given in the next section (although our proof would in general give much better constants for the BD map). What makes Theorem \ref{thm.BDDEquiv} important is the strength of inequality (\ref{eqn.BDconstant}). This allows us to draw a much stronger conclusion about BL equivalence than previous results would have allowed.
\begin{theorem}\label{thm.BL equiv}
If $Y=Y_{\mc{S},x}$ is a cut-and-project set with $d=k-1$, and if there is a $C>0$ such that $\mc{S}$ is a countable disjoint union of special intervals, with no more than $C$ special intervals of any given length, then $Y$ is BL to a lattice.
\end{theorem}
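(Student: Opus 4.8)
The plan is to break $\mc{S}$ into special intervals organised by scale, to pin each scale to a genuine lattice using the quantitative Theorem~\ref{thm.BDDEquiv}, and to patch the resulting bounded-displacement maps together into a single bi-Lipschitz map onto a lattice.

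Write $\mc{S}=\bigsqcup_{\ell\ge 1}\mc{S}_\ell$ as a disjoint union of special intervals, at most $C$ of any fixed length. Since the $\mc{S}_\ell$ are disjoint subsets of $\R/\Z$, $\sum_\ell|\mc{S}_\ell|\le 1$; if only finitely many $\mc{S}_\ell$ occur the theorem follows from Corollary~\ref{cor.BDequiv}, so assume infinitely many occur, whence $\sum_\ell|\mc{S}_\ell|<\infty$ and therefore $\#\{\ell:|\mc{S}_\ell|\ge 1/s\}=o(s)$ as $s\to\infty$. By Theorem~\ref{thm.BDDEquiv} each $Y_\ell:=Y_{\mc{S}_\ell,x}$ is bounded-displacement equivalent to a lattice $\Lambda_\ell$ of covolume $v_\ell\asymp_V|\mc{S}_\ell|^{-1}$, via a bijection $g_\ell$ with $\sup_{y\in Y_\ell}|g_\ell(y)-y|\le D_\ell:=K_V|\mc{S}_\ell|^{-1}$, and $Y=\bigsqcup_\ell Y_\ell$. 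The decisive numerical feature is that $Y_\ell$ has density $\rho_\ell:=v_\ell^{-1}\asymp_V|\mc{S}_\ell|$, so $\sum_\ell\rho_\ell<\infty$, while the displacement bound satisfies $\rho_\ell D_\ell\asymp_V 1$, \emph{uniformly in} $\ell$. This uniform control of $\rho_\ell D_\ell$ is exactly what the exponent $-1$ in \eqref{eqn.BDconstant} buys, and, combined with the ``at most $C$ of any length'' hypothesis, it is what will make the estimates below summable; a weaker power of $|\mc{S}|^{-1}$ in \eqref{eqn.BDconstant} would not suffice.

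I would then build the bi-Lipschitz map by a coarse-to-fine argument. For a threshold $t>0$, the ``head'' $Y_{\ge t}:=\bigsqcup_{|\mc{S}_\ell|\ge t}Y_\ell=Y_{\mc{S}_{\ge t},x}$ is a \emph{finite} union of special intervals, hence bounded-displacement equivalent to a lattice by Corollary~\ref{cor.BDequiv}, while the ``tail'' $Y_{<t}:=Y\setminus Y_{\ge t}=Y_{\mc{S}_{<t},x}$ is a cut-and-project net whose window has total length $\theta(t):=|\mc{S}_{<t}|\to 0$. Roughly speaking, one incorporates the tail into the lattice picture of the head one special interval at a time: each $Y_\ell$ with $|\mc{S}_\ell|<t$ is moved onto $\Lambda_\ell$ and $\Lambda_\ell$ is ``dilated into'' the existing net by a bi-Lipschitz homeomorphism of distortion $1+O(\rho_\ell)$, so that, since $\sum_\ell\rho_\ell<\infty$, the accumulated distortion is a convergent product $\prod_\ell(1+O(\rho_\ell))$ and the limiting map is a bi-Lipschitz bijection of $Y$ onto a lattice of covolume $(\sum_\ell\rho_\ell)^{-1}$. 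Equivalently, the same estimates give, for every cube $Q$ of side $s$,
\[
\bigl|\#(Y\cap Q)-\rho\,|Q|\bigr|\ \le\ \sum_\ell\bigl|\#(Y_\ell\cap Q)-\rho_\ell|Q|\bigr|\ =\ o(s^d),\qquad \rho:=\textstyle\sum_\ell\rho_\ell,
\]
since the $o(s)$ many $\ell$ with $D_\ell\le s$ each contribute a boundary error $O_V(s^{d-1})$, while the remaining $\ell$ together form the cut-and-project net $Y_{<K_V/s}$, of density $\theta(K_V/s)=o(1)$, whose count in $Q$ is $o(s^d)$; a multiscale discrepancy bound of this quality then feeds the prescribed-Jacobian technique of Burago--Kleiner and McMullen to yield the bi-Lipschitz equivalence with a lattice.

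The step I expect to be the main obstacle is making this tail estimate precise once $d\ge 2$, where $D_\ell=K_V|\mc{S}_\ell|^{-1}$ exceeds the covering radius $\asymp_V|\mc{S}_\ell|^{-1/d}$ of $Y_\ell$: such an ``ultra-sparse'' $Y_\ell$ cannot be moved onto $\Lambda_\ell$ in isolation without destroying separation, so the tail must be handled collectively rather than one interval at a time. Concretely, one must bound $\#(Y_{<t}\cap Q)$ uniformly over cubes $Q$ of side $s$ with $t\asymp 1/s$; here I would invoke unique ergodicity of the linear $\R^d$-action on $\R^{d+1}/\Z^{d+1}$ associated to $V$, which replaces the naive interval-by-interval count by a Birkhoff-average estimate, together with the bounded-discrepancy property of each constituent special interval (in the sense of the abstract) to control the error. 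Checking that these errors genuinely sum to $o(s^d)$ uniformly in $Q$ --- equivalently, that the countable window $\mc{S}$ is regular enough for the relevant Birkhoff averages to converge uniformly --- is the crux.
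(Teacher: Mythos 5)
Your overall architecture --- decompose $\mc{S}$ by scale, split into a head of large special intervals and a tail of small ones, estimate point counts in cubes, and feed the result into the Burago--Kleiner-type criterion --- matches the paper's, and you correctly identify that the exponent $-1$ in (\ref{eqn.BDconstant}) (equivalently the uniform constant in Theorem \ref{thm.ReturnTimes}) is what makes the argument work. But there is a genuine quantitative gap at the final step: the estimate you actually derive is $\bigl|\#(Y\cap Q)-\rho|Q|\bigr|=o(s^d)$, i.e.\ $D_Y(s,\rho)=o(1)$, and this is \emph{not} sufficient for Theorem \ref{thm.BuragoKleiner}, which requires $\sum_k D_Y(2^k,\lambda)<\infty$. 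A $o(1)$ discrepancy bound is exactly the kind of information that the Burago--Kleiner/McMullen counterexamples show cannot by itself rule out non-rectifiability; summability over dyadic scales is the whole content of the criterion. Your own remark that the ``at most $C$ of any given length'' hypothesis ``is what will make the estimates below summable'' is never cashed in: the only place you use a counting hypothesis is the bound $\#\{\ell:|\mc{S}_\ell|\ge 1/s\}=o(s)$, which follows already from $\sum_\ell|\mc{S}_\ell|<\infty$ and is far too weak.

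The fix, which is how the paper proceeds, is twofold. First, since all special intervals $\J(m,\cdot)$ of a fixed level $m$ have the same length $\asymp 1/q_m$ (Lemma \ref{lem.IntervalLength}) and the $q_m$ grow at least geometrically, the $C$-hypothesis forces the number of head intervals at scale $s=2^K$ to be $O(C\log s)=O(K)$, not merely $o(s)$. Second, the counting is done one line at a time in the $\alpha_1$-direction: on each line segment of $2^K$ consecutive integers, each head interval contributes error $O(1)$ \emph{uniformly} by Theorem \ref{thm.ReturnTimes}, and the entire tail contributes at most $O(1)$ points because it is contained in boundedly many special intervals of the cutoff level $m_\ell$, whose return-time gaps exceed $q_{m_\ell-1}>2^K$. (This combinatorial gap bound replaces your appeal to unique ergodicity, which gives uniformity but no rate and hence again only $o(s^d)$.) Summing over the $2^{(k-2)K}$ lines gives $\#(Y'\cap B)=2^{(k-1)K}|\mc{S}|+O(K2^{(k-2)K})$, i.e.\ $D_Y(2^K,|\mc{S}|)\ll K2^{-K}$, which is summable. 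Your separate ``patching'' construction via convergent products of distortions is not needed and, as you note yourself, breaks down for $d\ge 2$ because the displacement $K_V|\mc{S}_\ell|^{-1}$ exceeds the covering radius of the ultra-sparse nets $Y_\ell$.
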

Note that in all of our results we have dispensed with the Diophantine condition which was present in previous work. As the reader will discover, the key to making this possible is Theorem \ref{thm.ReturnTimes} below. The theorem implies that for any irrational $\alpha,$
\begin{equation}\label{eqn.UnifDiscOverSpecInts}
\sup_{N\in\N}\sup_{\mc{J}\in\R/\Z}\left|\#\{n\in\N : n\le N, n\alpha\in\mc{J}\}-N|\mc{J}|\right|\le K,
\end{equation}
where $K$ is a universal constant, and the inner $\sup$ is taken over all special intervals $\mc{J}$ for $\alpha$.

In order to explain how this fits in the context of previous work, we digress for a moment. It is of long standing importance in many mathematical and scientific disciplines to be able to quantify how evenly distributed a sequence of real numbers is, modulo $1$. One way of doing this is to define, for $N\in\N$, the {\em discrepancy} $D_N$ of a sequence $\{x_n\}_{n=1}^\infty\subseteq\R/\Z$ by
\[D_N(\{x_n\})=\sup_{\mc{I}\subseteq\R/\Z}\left|\#\{1\le n\le N:x_n\in \mc{I}\}-N|\mc{I}|\right|,\]
where the supremum is taken over all intervals $\mc{I}$ in $\R/\Z$. A useful and fairly precise estimate for $D_N$ can then be obtained by using the Erd\H{o}s-Tur\'{a}n Inequality, which states that for any $M\in\N$,
\[D_N\ll \frac{N}{M}+\sum_{m=1}^M\frac{1}{m}\left|\sum_{n=1}^N e(mx_n)\right|,\]
where $e(x)=\exp (2\pi i x)$. Considering the case when $x_n=n\alpha$, the exponential sum here is nicely bounded by
\begin{equation}\label{eqn.ExpSum}
\sum_{n=1}^N e(mn\alpha)\ll\min\left\{N,\frac{1}{\|m\alpha\|}\right\},
\end{equation}
where $\|x\|=\min_{a\in\Z}|x-a|$. This analysis leads to an upper bound for $D_N$ which necessarily depends on how well approximable $\alpha$ is by rational numbers. For example, if $\alpha$ is a Liouville number then there will be a significant number of times when the right hand side of (\ref{eqn.ExpSum}) is $N$, and this will lead to a large estimate of discrepancy. Unfortunately the estimate which is obtained from this argument is not far from the truth. Even in the best possible scenario (when $\alpha$ is {\em badly approximable}), it is known (e.g. see \cite{Schm1972}) that
\[\limsup_{N\rar\infty}\frac{D_N(\{n\alpha\})}{\log N} > 0,\]
and for well approximable numbers the situation can be much worse. This was essentially the source of the limitation of our techniques in \cite{HaynKellWeis2013}, which made it necessary for us to impose a Diophantine condition on $V$.

On the other hand, it was proved by Hecke \cite{Heck1922} and Ostrowksi \cite{Ostr1927/30} that if $\alpha$ is any irrational number and $\mc{I}$ is an interval of length $\|\ell\alpha\|,$ for some $\ell\in\N$, then there is a constant $C(\ell)$ such that
\[\sup_{N\in\N}\left|\#\{n\in\N : n\le N, n\alpha\in\mc{I}\}-N|\mc{I}|\right|\le C(\ell).\]
However their estimates for the constants $C(\ell)$ tend to infinity with $\ell$ (see the discussion at the beginning of \cite[Section 4]{Kest1966/67}). As we will see, our special intervals are a subset of the intervals considered by Hecke and Ostrowski, obtained by restricting $\ell$ to an infinite subsequence of positive integers that depends on the continued fraction expansion to $\alpha$. For this subset we prove the uniform bound recorded in (\ref{eqn.BDconstant}).

\vspace*{.1in}

{\em Acknowledgments:} The author would like to thank Henna Koivusalo for her detailed examination and comments concerning this work, Barak Weiss and Yann Bugeaud for their valuable feedback and advice, and Robert Tichy for pointing out an important reference.

\section{Initial reduction of the problem}\label{sec.InitialReduction}
Throughout the remainder of the paper we assume we are working with a $k-1$ dimensional subspace $V$ of $\R^k$, so that our section $\mathcal{S}$ is a connected segment of a curve in $\R^k$ which is everywhere transverse to $V.$ The $k=2$ cases of what we are going to say are in general much easier, so in much of what follows we will implicitly assume that $k\ge 3$.

If $V$ is not totally irrational then it is contained in a proper rational subspace $W$ of $\R^k$, which contains a lattice that is a subgroup of $\Z^k$. Since we are working in codimension one it follows that $W=V$ and that $Y$ is either empty or is a lattice. Therefore we assume without loss of generality that $V$ is totally irrational.

By deforming $\mathcal{S}$ continuously in the directions parallel to $V$ we may assume that $\mathcal{S}$ is a line segment which is parallel to one of the standard basis vectors for $\R^k$ (there is at least one such vector not lying in the subspace $V$). There is no loss of generality in this assumption for what we are trying to prove, as it causes each of the points in the corresponding set $Y$ to move by at most some fixed finite amount. This deformation will not add any points to $Y$, and it will only delete points if the length of the resulting line segment is greater than $1$. In the latter case we can write the line segment as a disjoint union of line segments, each having length at most $1$, and we can apply our arguments below to show that each of the resulting nets is BD (or BL) to a lattice. Then by appealing to \cite[Proposition 2.4]{HaynKellWeis2013}, we can conclude that the set $Y$ is also BD (or BL, by using the proof of the Proposition mentioned) to a lattice. Therefore we assume after deforming and relabelling, that $\mathcal{S}$ is an interval of length less than $1$, parallel to the $k^{\rm{th}}$ standard basis vector $e_k$.

Finally, we assume that $\mathcal{S}$ is contained in the line in $\R^k$ spanned by $e_k$. Again, there is no loss of generality in this assumption, because our proof in what follows applies to $Y_{\mathcal{S},x}$ for all $x\in\R^k$. Most of our analysis below will take place on integer translates of the line containing $\mathcal{S}$, and it will sometimes be convenient to identify $\mathcal{S}$ with the interval $\mathcal{I}\subseteq\R$ to which it corresponds. {\em Therefore, when we say that $\mc{S}$ is a special interval, or a union of special intervals of a certain form, as we have in the Introduction, this is to be interpreted as a statement about $\mc{I}$.}

Now suppose that $x\in\R^k$ and choose $\alpha_1,\ldots ,\alpha_k\in\R$ so that
\[V+x=\left\{\left(y_1,\ldots ,y_{k-1},\alpha_k+\sum_{i=1}^{k-1}y_i\alpha_{i}\right):y_1,\ldots ,y_{k-1}\in\R\right\}.\]
Then after rotation and re-scaling (which introduces a scaling factor that depends possibly on $V$) we see that $Y_{\mathcal{S},x}$ is BD equivalent to the set
\begin{equation}\label{eqn.ReductionToY'}
Y'=\left\{(n_1,\ldots ,n_{k-1})\in\Z^{k-1}:\alpha_k+\sum_{i=1}^{k-1}n_i\alpha_{i}\in \mathcal{I}~\mathrm{mod}~ 1\right\}.
\end{equation}
Note that our assumption that $V$ is totally irrational implies that at least one of the numbers $\alpha_1,\ldots ,\alpha_{k-1}$ is irrational.

Our plan of proof is to carefully analyze, for any choice of $\gamma\in\R$, and $M,N\in\N$, the number of $N\le n\le N+M$ satisfying
\[n\alpha_1-\gamma\in\mathcal{I}~\mathrm{mod}~1.\]
When $\mathcal{I}$ is what we call a special interval, this will allow us to explicitly define a BD map from $Y'$ to a lattice. For a large class of more general intervals (those mentioned in the hypotheses of Theorem \ref{thm.BL equiv}), our estimates for the quantities above will allow us to accurately count the number of points of $Y'$ in large hypercubes in $\R^{k-1}$. Then we will complete the proof of Theorem \ref{thm.BL equiv} by appealing to a known sufficient condition for BL equivalence, which we now describe.

For any separated net $Y\subseteq\R^d$ and for any $\lambda,\rho>0$, define
\[D_{Y}(\rho, \lambda) = \sup_B \left|\frac{\#(Y\cap B)}{\lambda|B|}-1\right|,\]
where the supremum is taken over all hypercubes $B \subset \R^d$ with side length $\rho$. The result we will use, which is due to Burago and Kleiner for the $d=2$ case and Aliste-Prieto, Coronel, and Gambaudo for the $d>2$ case, is the following.
\begin{theorem}[\cite{BuraKlei2002},\cite{AlisCoroGamb2011}]\label{thm.BuragoKleiner}
If there is a $\lambda>0$ for which
\[\sum_{k=1}^\infty D_Y(2^k, \lambda)<\infty\]
then $Y$ is BL to a lattice.
\end{theorem}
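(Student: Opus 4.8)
The plan is to produce a bi-Lipschitz bijection onto a lattice in three stages: normalize the target, reduce the net problem to a prescribed-Jacobian problem for a bi-Lipschitz homeomorphism of $\R^d$, and solve the latter by a multiscale construction in which the summability hypothesis controls the bi-Lipschitz constant. Since any two lattices of equal covolume are BD, and hence BL, and every lattice is BL to $\Z^d$, it suffices to exhibit a bi-Lipschitz bijection from $Y$ onto some fixed lattice $\Gamma$ of density $\lambda$; I would take $\Gamma=\lambda^{-1/d}\Z^d$.

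First I would associate to $Y$ a density function $\rho_Y$ by averaging the counting measure $\sum_{y\in Y}\delta_y$ at dyadic scales, so that on each dyadic cube $Q$ of side $2^k$ the average of $\rho_Y$ equals $\#(Y\cap Q)/|Q|$. The definition of $D_Y$ then transfers the hypothesis directly: the relative oscillation of $\rho_Y/\lambda$ between a dyadic cube of side $2^k$ and its parent is bounded by a constant multiple of $D_Y(2^k,\lambda)$, so $\sum_k D_Y(2^k,\lambda)<\infty$ says precisely that the dyadic variation of $\rho_Y/\lambda$ is summable. The reduction is then the following. If there is a bi-Lipschitz homeomorphism $\phi:\R^d\rar\R^d$ whose Jacobian realizes $\det D\phi=\rho_Y/\lambda$ in this averaged dyadic sense, then $\phi$ carries $Y$ to a net $\phi(Y)$ whose count in every cube differs from $\lambda$ times the volume by a bounded amount. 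Such a net of uniformly bounded discrepancy is BD to $\Gamma$ by a Hall marriage (Laczkovich-type) transport argument, and composing the bi-Lipschitz map $Y\rar\phi(Y)$ with this BD map yields the desired equivalence.

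The heart of the proof is the construction of $\phi$, which I would build as an infinite composition $\phi=\lim_K\phi_K\circ\cdots\circ\phi_1$, where $\phi_k$ acts scale by scale and corrects the volume distribution among the $2^d$ children of each dyadic cube to match the target averages of $\rho_Y/\lambda$ at the next finer scale. The key analytic input is a single-cube lemma: given a cube partitioned into finitely many subcubes with prescribed target volumes, each within a factor $1+\epsilon$ of its original volume, there is a bi-Lipschitz homeomorphism of the cube, equal to the identity on the boundary, realizing those volumes with bi-Lipschitz constant $1+C\epsilon$. Fixing the boundary lets the cube-by-cube maps glue into a single homeomorphism $\phi_k$, and the bound $1+C\,D_Y(2^k,\lambda)$ on its bi-Lipschitz constant makes the product $\prod_k\left(1+C\,D_Y(2^k,\lambda)\right)$ converge precisely because $\sum_k D_Y(2^k,\lambda)<\infty$; this yields a uniform bi-Lipschitz bound for the limit $\phi$ and shows it is a homeomorphism with the prescribed averaged Jacobian.

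I expect the single-cube volume-adjustment lemma to be the main obstacle, and this is exactly where the two dimensional argument of Burago and Kleiner and the higher dimensional argument of Aliste-Prieto, Coronel and Gambaudo diverge. In $d=2$ one can redistribute volume by an explicit two-step map that rearranges along one coordinate and then the other, with the bi-Lipschitz bound following from elementary estimates. In dimension $d>2$ such coordinatewise rearrangements fail to stay bi-Lipschitz, and one must construct the transport more carefully while simultaneously keeping the identity on the boundary and the bi-Lipschitz constant linear in $\epsilon$. Securing this quantitative control, uniformly over all admissible families of target volumes, is the technical crux that makes the telescoping product converge and hence the limit map genuinely bi-Lipschitz.
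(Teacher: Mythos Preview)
The paper does not prove this theorem; it is quoted from \cite{BuraKlei2002} and \cite{AlisCoroGamb2011} and used as a black box in the proof of Theorem~\ref{thm.BL equiv}. So there is no ``paper's own proof'' to compare against.

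That said, your sketch is a faithful outline of the argument in the cited references. The reduction from the net problem to a prescribed-Jacobian problem, the multiscale construction of $\phi$ as a limit of compositions $\phi_K\circ\cdots\circ\phi_1$ acting on successive dyadic levels, the single-cube volume-adjustment lemma with bi-Lipschitz constant $1+C\epsilon$, and the identification of the summability hypothesis as exactly what makes $\prod_k(1+C\,D_Y(2^k,\lambda))$ converge --- this is precisely the Burago--Kleiner scheme, with the higher-dimensional single-cube lemma supplied by Aliste-Prieto, Coronel and Gambaudo. Your remark that the $d=2$ and $d>2$ cases diverge at the cube lemma is also accurate. One small point: in the actual references the passage from the bi-Lipschitz homeomorphism $\phi$ with the correct averaged Jacobian to a bi-Lipschitz bijection $Y\to\Gamma$ is slightly more direct than going through a Laczkovich-type BD step, but your route works as well.
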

We remark that there is a necessary and sufficient condition due to Laczkovich \cite[Theorem 1.1]{Lacz1992} for determining whether or not a separated net is BD to a lattice. For the case in our problem when $\mathcal{I}$ is a special interval we could have completed the proof of BD equivalence by appealing to this condition, but we chose instead to demonstrate a BD bijection.

\section{Analysis of return times}\label{sec.ReturnTimes}

\subsection{Continued fractions}
We write the simple continued fraction expansion of an irrational real number $\alpha$ as
\begin{align*}
\alpha = a_0 + \cfrac{1}{a_1+
            \cfrac{1}{a_2+
             \cfrac{1}{a_3+\dotsb}}}=[a_0; a_1, a_2, a_3, \dots ],
\end{align*}
where $a_0$ is an integer and $a_1, a_2, \dots $ is a sequence of positive integers uniquely determined by
$\alpha$. The rational numbers
\[\frac{p_k}{q_k}=[a_0;a_1,\ldots ,a_k],\quad k\ge 0,\]
are the principal convergents to $\alpha$, and it is assumed that $p_k$ and $q_k$ are coprime and that $q_k>0$ for all $k$. For $k\ge 0$ we also write
\begin{equation*}
D_k=q_k\alpha-p_k.
\end{equation*}
We have by the basic properties of continued fractions that for $k\ge 1$,
\begin{equation}\label{eqn:cfprop1}
p_{k+1}=a_{k+1}p_k+p_{k-1},\qquad q_{k+1}=a_{k+1}q_k+q_{k-1},\quad\text{ and}
\end{equation}
\begin{equation}\label{eqn:D_kineq}
(-1)^kD_k=\left|q_k\alpha-p_k\right|\le\frac{1}{q_{k+1}}.
\end{equation}
The following lemma describes what we will refer to as the Ostrowski expansion (with respect to $\alpha$) of an integer.
\begin{lemma}\label{lem:intcfexp}\cite[Section II.4]{RockSzus1992}
Suppose $\alpha\in\R$ is irrational. Then for every $n\in\N$ there is a unique integer $M\ge0$ and a unique sequence $\{c_{k+1}\}_{k=0}^\infty$ of integers such that $q_M\le n< q_{M+1}$ and
\begin{equation}\label{Ostexp1}
n=\sum_{k=0}^\infty c_{k+1}q_k,
\end{equation}
\begin{equation*}
\text{with }~0\le c_1<a_1,\quad 0\le c_{k+1}\le a_{k+1}\ \text{ for } \ k\ge 1,
\end{equation*}
\begin{equation*}
c_k=0 \quad \text{whenever}\quad c_{k+1}=a_{k+1}\ \text{ for some }k\ge 1, \qquad\text{and}
\end{equation*}
\begin{equation*}
c_{k+1}=0 \quad \text{for}  \quad k>M.
\end{equation*}
\end{lemma}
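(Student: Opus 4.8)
The plan is to derive existence from a greedy repeated-division algorithm and uniqueness from the fact that the admissibility conditions force that algorithm to be the only possibility. Since $q_0=1$ and the $q_k$ are nondecreasing and tend to infinity, for each $n\in\N$ there is a largest index $M\ge0$ with $q_M\le n$, and then automatically $q_M\le n<q_{M+1}$; this is the index $M$ in the statement.

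\emph{Existence.} I would set $r_M=n$ and run downward through $k=M,M-1,\dots,1,0$, defining
\begin{equation*}
c_{k+1}=\lf\frac{r_k}{q_k}\rf,\qquad r_{k-1}=r_k-c_{k+1}q_k,
\end{equation*}
and putting $c_{k+1}=0$ for $k>M$. By construction $0\le r_{k-1}<q_k$ for $0\le k\le M$, and since $q_0=1$ the last remainder is $r_{-1}=r_0-c_1=0$, so telescoping yields \eqref{Ostexp1}. The coefficient bounds follow from the recurrence \eqref{eqn:cfprop1}: at the top, $r_M=n<q_{M+1}=a_{M+1}q_M+q_{M-1}<(a_{M+1}+1)q_M$ gives $c_{M+1}\le a_{M+1}$, while for $1\le k\le M$ the inequality $r_{k-1}<q_k=a_kq_{k-1}+q_{k-2}\le(a_k+1)q_{k-1}$ gives $c_k\le a_k$, and $r_0<q_1=a_1$ gives $c_1<a_1$. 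For the carry condition, if $c_{k+1}=a_{k+1}$ with $k\ge1$ then $r_{k-1}=r_k-a_{k+1}q_k<q_{k+1}-a_{k+1}q_k=q_{k-1}$, hence $c_k=\lf r_{k-1}/q_{k-1}\rf=0$.

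\emph{Uniqueness.} The key point is to show that any finitely supported sequence $(c_{k+1})_{k\ge0}$ satisfying all the admissibility conditions, with $c_{k+1}=0$ for $k>M$, obeys $\sum_{k=0}^{M}c_{k+1}q_k<q_{M+1}$. I would prove this by induction on $M$, splitting into the case $c_{M+1}<a_{M+1}$ (where the inductive bound $\sum_{k=0}^{M-1}c_{k+1}q_k\le q_M-1$ gives a total at most $(c_{M+1}+1)q_M\le a_{M+1}q_M<q_{M+1}$) and the case $c_{M+1}=a_{M+1}$ (where the carry rule forces $c_M=0$, so the total is $a_{M+1}q_M+\sum_{k=0}^{M-2}c_{k+1}q_k\le a_{M+1}q_M+q_{M-1}-1<q_{M+1}$), each step invoking \eqref{eqn:cfprop1}. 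Together with the trivial lower bound $\sum_{k=0}^{M}c_{k+1}q_k\ge c_{M+1}q_M\ge q_M$ whenever $c_{M+1}\ge1$, this shows that $M$ is pinned down by the constraint $q_M\le n<q_{M+1}$. Given $M$, the identity $n=c_{M+1}q_M+\big(\sum_{k=0}^{M-1}c_{k+1}q_k\big)$ with the bracketed term lying in $[0,q_M)$ forces $c_{M+1}=\lf n/q_M\rf$ and identifies the tail with $n-c_{M+1}q_M$; iterating downward determines every $c_{k+1}$, and the values one gets are precisely the greedy ones produced above.

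I expect the only genuinely delicate step to be the uniqueness estimate $\sum_{k=0}^{M}c_{k+1}q_k<q_{M+1}$: the naive bound $\sum_{k=0}^{M}a_{k+1}q_k$ overshoots, and it is exactly the admissibility rule forbidding $c_{k+1}=a_{k+1}$ together with $c_k\neq0$ that closes the gap in the induction. The rest is a routine unwinding of the division algorithm and the continued-fraction recurrence \eqref{eqn:cfprop1}; note that \eqref{eqn:D_kineq} plays no role in this particular lemma, and will only enter later in the return-time analysis.
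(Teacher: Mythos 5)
Your greedy division-algorithm proof is correct, and it is essentially the standard argument: the paper itself gives no proof of this lemma (it is quoted from Rockett--Sz\"usz \cite[Section II.4]{RockSzus1992}), and indeed later remarks that the expansion ``can be obtained by using the greedy algorithm,'' which is exactly what you carry out, with the uniqueness hinging on the bound $\sum_{k=0}^{M}c_{k+1}q_k<q_{M+1}$ that you correctly identify as the delicate step. The only cosmetic slip is that $q_{M+1}=a_{M+1}q_M+q_{M-1}<(a_{M+1}+1)q_M$ can fail to be strict when $q_{M-1}=q_M$ (e.g.\ $M=1$, $a_1=1$), but the non-strict version still yields $c_{M+1}\le a_{M+1}$, so nothing breaks.
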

For convenience we will consider the integer $0$ to have the Ostrowski expansion given by taking $c_{k+1}=0$ for all $k$. There is a similar expansion for real numbers which uses the $D_k$'s in place of the $q_k$'s. In what follows $\{x\}$ denotes the fractional part of a real number $x$.
\begin{lemma}\label{lem:realcfexp}\cite[Theorem II.6.1]{RockSzus1992}
Suppose $\alpha\in\R\setminus\Q$ has continued fraction expansion as above. For any $\beta\in [-\{\alpha\},1-\{\alpha\})\setminus (\alpha\Z+\Z)$ there is a unique sequence $\{b_{k+1}\}_{k=0}^{\infty}$ of integers such that
\begin{equation}\label{Ostexp2}
\beta=\sum_{k=0}^\infty b_{k+1}D_k,
\end{equation}
\begin{equation*}
\text{with }~0\le b_1<a_1,\quad 0\le b_{k+1}\le a_{k+1}\ \text{ for } \ k\ge 1,\qquad\text{and}
\end{equation*}
\begin{equation*}
b_k=0 \quad \text{whenever}\quad b_{k+1}=a_{k+1}\ \text{ for some }k\ge 1.
\end{equation*}
\end{lemma}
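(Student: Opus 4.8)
The plan is to prove the lemma by producing, in close parallel with the integer Ostrowski expansion of Lemma~\ref{lem:intcfexp}, an explicit nested sequence of tilings of the interval $[-\{\alpha\},1-\{\alpha\})$ by closed sub-intervals indexed by admissible digit strings; the expansion of $\beta$ is then read off from the unique shrinking nest of pieces containing $\beta$. First I would record the elementary facts about the $D_k$: the three-term recursion $D_{k+1}=a_{k+1}D_k+D_{k-1}$ (immediate from (\ref{eqn:cfprop1})), the strict alternation of signs of the $D_k$ together with $|D_k|\to0$ (from (\ref{eqn:D_kineq})), and, combining these, the identity $|D_{k-1}|=a_{k+1}|D_k|+|D_{k+1}|$ and its rearrangement $a_{k+1}D_k=D_{k+1}-D_{k-1}$. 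I would also use the conventions $q_{-1}=0$, $p_{-1}=1$, which give $D_{-1}=-1$ and $D_0=\{\alpha\}$, so that the target interval may be written as $[-D_0,\,-D_{-1}-D_0)$.

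The combinatorial core is as follows. Call a one-sided integer sequence $(b_{n+1},b_{n+2},\dots)$ \emph{admissible in state $A$} if $0\le b_{k+1}\le a_{k+1}$ for all $k\ge n$ and $b_k=0$ whenever $b_{k+1}=a_{k+1}$ (the constraint at $k=n$ being vacuous), and \emph{admissible in state $B$} if moreover $b_{n+1}<a_{n+1}$. Let $A_n$, respectively $B_n$, be the set of reals $\sum_{k\ge n}b_{k+1}D_k$ as the string ranges over admissible sequences of the corresponding type. Using the alternation of signs and telescoping via $a_{k+1}D_k=D_{k+1}-D_{k-1}$, a short computation shows that $A_n$ is the closed interval with endpoints $-D_{n-1}$ and $-D_n$ and that $B_n$ is the closed interval with endpoints $-D_n$ and $-D_{n-1}-D_n$; in particular $0\in B_n\subseteq A_n$, $|A_n|=|D_{n-1}|+|D_n|$ and $|B_n|=|D_{n-1}|$, so both diameters tend to $0$. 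Splitting off the leading digit gives the set identities
\begin{equation*}
A_n=A_{n+1}\cup\bigcup_{j=1}^{a_{n+1}}\bigl(jD_n+B_{n+1}\bigr),\qquad B_n=A_{n+1}\cup\bigcup_{j=1}^{a_{n+1}-1}\bigl(jD_n+B_{n+1}\bigr),
\end{equation*}
in which the extracted term $A_{n+1}$ accounts for leading digit $0$ (which returns to state $A$) and each term with $j\ge1$ for leading digit $j$ (which passes to state $B$). The crucial point is that these are not mere coverings but \emph{tilings}, i.e. the pieces have pairwise disjoint interiors: this follows by checking that the total length of the right-hand side equals $|A_n|$, respectively $|B_n|$, via $|D_{n-1}|=a_{n+1}|D_n|+|D_{n+1}|$, together with the fact that consecutive pieces $jD_n+B_{n+1}$ abut (the endpoints of $A_{n+1}$ and $B_{n+1}$ being explicit). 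Iterating --- starting from $B_0=[-\{\alpha\},1-\{\alpha\}]$ and, at each later stage, applying whichever of the two subdivisions is dictated by whether the last chosen digit was $0$ --- produces for every $m\ge1$ a tiling of $[-\{\alpha\},1-\{\alpha\}]$ into the closed intervals $P_m+S_m$, where $P_m=\sum_{k=0}^{m-1}b_{k+1}D_k$ runs over admissible Ostrowski prefixes $(b_1,\dots,b_m)$ and $S_m$ is $A_m$ or $B_m$ according as $b_m=0$ or not. Since every $D_k$, as well as $D_{-1}=-1$, lies in $\Z+\alpha\Z$, \emph{every endpoint occurring in any of these tilings lies in $\Z+\alpha\Z$}.

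Now fix $\beta\in[-\{\alpha\},1-\{\alpha\})\setminus(\alpha\Z+\Z)$. At each stage $m$ the tiling pieces cover $[-\{\alpha\},1-\{\alpha\}]$, so $\beta$ lies in at least one of them; it cannot lie on a shared boundary since those points belong to $\Z+\alpha\Z$; hence $\beta$ lies in the interior of a unique piece $P_m+S_m$, and these pieces are nested in $m$ with diameters at most $|D_{m-1}|+|D_m|\to0$. This determines a unique digit string $(b_{k+1})_{k\ge0}$, and since $0\in S_m$ we have $|P_m-\beta|\le|S_m|\to0$, whence $\beta=\sum_{k\ge0}b_{k+1}D_k$; the admissibility constraints on the string are precisely the digit conditions in the statement. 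Uniqueness follows from the same picture: two admissible expansions of $\beta$ must agree, for at the first index of disagreement the two partial sums would place $\beta$ in the interiors of two interior-disjoint tiling pieces.

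The only step that is not purely formal is the verification that the two displayed subdivisions are tilings rather than merely coverings, and this is exactly where the Ostrowski constraint ``$b_k=0$ whenever $b_{k+1}=a_{k+1}$'' is used: it reduces to the length identity $|D_{n-1}|=a_{n+1}|D_n|+|D_{n+1}|$ plus the observation that consecutive pieces abut. The rest is routine manipulation of the continued fraction recursions, and the hypothesis $\beta\notin\alpha\Z+\Z$ enters only to keep $\beta$ off the countable set of tiling boundaries --- which is precisely what makes the expansion both exist and be unique.
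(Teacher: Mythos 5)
Your argument is correct. Note, however, that the paper offers no proof of this lemma at all: it is stated as a known result and attributed to Rockett--Sz\"usz \cite[Theorem II.6.1]{RockSzus1992}, so there is nothing in the paper to compare against line by line. What you have written is a complete, self-contained proof along the standard lines: the sign alternation and the recursion $D_{k+1}=a_{k+1}D_k+D_{k-1}$ give the length identity $|D_{n-1}|=a_{n+1}|D_n|+|D_{n+1}|$, which is exactly what makes your two subdivision formulas tilings rather than coverings (I checked the endpoint arithmetic in the case $n$ even: the pieces $A_{n+1}=[-|D_n|,|D_{n+1}|]$ and $jD_n+B_{n+1}=[(j-1)|D_n|+|D_{n+1}|,\,j|D_n|+|D_{n+1}|]$ do abut and exhaust $[-|D_n|,|D_{n-1}|]$), and the hypothesis $\beta\notin\alpha\Z+\Z$ is used exactly where it should be, to keep $\beta$ off the countable set of tile boundaries so that both existence and uniqueness of the nested sequence of pieces hold. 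One small presentational remark: for the argument you only need that the set of admissible sums is \emph{contained} in the closed interval with the stated endpoints (for uniqueness), while the tiling and nesting statements can be read purely as statements about intervals defined by their endpoints (for existence); asserting at the outset that $A_n$ and $B_n$ \emph{equal} those intervals is true but is most cleanly obtained as a consequence of the nested-tiling construction rather than as an ingredient of it. This does not affect the correctness of the proof.
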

The relevance of these expansions is explained by the following result, which can be deduced from the arguments in \cite[Section II.6]{RockSzus1992} (a rigorous proof can also be found in \cite{BereHaynVaalVela2014}, which should soon be available electronically).
\begin{lemma}\label{lem:inhomappbnd}
Let $\alpha\in\R\setminus\Q$ and suppose that $\gamma\in [-\{\alpha\},1-\{\alpha\})\setminus(\alpha\Z+\Z).$ Choose an integer $n\in\N$ and, referring to the expansions (\ref{Ostexp1}) and (\ref{Ostexp2}), write $\delta_{k+1}=c_{k+1}-b_{k+1}$ for $k\ge 0$. If there is an integer $m\ge 4$ such that $\delta_{k+1}=0$ for all $k<m$, then
\begin{equation*}
\left|n\alpha-\sum_{k=0}^Mc_{k+1}p_k-\gamma\right|\le \frac{3\max(1,|\delta_{m+1}|)}{q_{m+1}}.
\end{equation*}
\end{lemma}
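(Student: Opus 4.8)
The plan is to prove Lemma \ref{lem:inhomappbnd} by directly estimating the error $n\alpha - \sum_{k=0}^M c_{k+1}p_k - \gamma$ using the two Ostrowski expansions. First I would substitute the expansion (\ref{Ostexp1}) for $n$ into $n\alpha$ to get $n\alpha = \sum_{k=0}^M c_{k+1}q_k\alpha = \sum_{k=0}^M c_{k+1}(p_k + D_k)$, so that
\[
n\alpha - \sum_{k=0}^M c_{k+1}p_k = \sum_{k=0}^M c_{k+1}D_k.
\]
Then, using the expansion (\ref{Ostexp2}) for $\gamma$, namely $\gamma = \sum_{k=0}^\infty b_{k+1}D_k$, I obtain
\[
n\alpha - \sum_{k=0}^M c_{k+1}p_k - \gamma = \sum_{k=0}^M \delta_{k+1}D_k - \sum_{k>M} b_{k+1}D_k,
\]
where $\delta_{k+1} = c_{k+1} - b_{k+1}$. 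The hypothesis that $\delta_{k+1}=0$ for all $k < m$ kills the first several terms, so the first sum starts at $k = m$, and it remains to bound $\sum_{k\ge m, k\le M} \delta_{k+1}D_k$ together with the tail $\sum_{k>M} b_{k+1}D_k$.

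The key estimates come from (\ref{eqn:D_kineq}), which gives $|D_k| \le 1/q_{k+1}$, combined with the fact that the $q_k$ grow at least like consecutive Fibonacci-type numbers (so $q_{k+1} \ge q_k + q_{k-1}$, hence $q_{m+j} \ge F_j q_{m+1}$ roughly, giving geometric decay of $1/q_{k+1}$ in $k$). For the tail $\sum_{k>M} b_{k+1}D_k$, I would use $0 \le b_{k+1} \le a_{k+1}$ and the telescoping-type bound $a_{k+1}|D_k| = a_{k+1}|q_k\alpha - p_k| \le q_{k+1}|D_k|/q_k \cdot \text{(something)}$; more cleanly, $a_{k+1}/q_{k+1} \le 1/q_k$ from (\ref{eqn:cfprop1}), so $a_{k+1}|D_k| \le a_{k+1}/q_{k+1} \le 1/q_k$, and these again decay geometrically, so the whole tail is $O(1/q_{M+1}) = O(1/q_{m+1})$ since $M \ge m$. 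For the main sum, the term $k=m$ contributes $|\delta_{m+1}||D_m| \le |\delta_{m+1}|/q_{m+1}$, and for $k > m$ one uses $|\delta_{k+1}| \le a_{k+1}$ (since both $c_{k+1}$ and $b_{k+1}$ lie in $[0,a_{k+1}]$) together with $a_{k+1}|D_k|\le 1/q_k$ and the geometric decay to sum everything against $1/q_{m+1}$.

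Assembling these pieces, each of the three contributions — the $k=m$ term, the sum over $m < k \le M$, and the tail over $k > M$ — is bounded by a constant multiple of $\max(1,|\delta_{m+1}|)/q_{m+1}$, and I would track the constants carefully so that they add up to at most $3$; this is where the hypothesis $m \ge 4$ is used, to ensure the geometric series of $q$-ratios has started and the constant does not exceed $3$ (for small $k$ the ratios $q_k/q_{k+1}$ need not be small, so one needs to be past the initial transient). The main obstacle I anticipate is precisely this bookkeeping: getting the geometric-decay constants for $\sum_{k\ge m} 1/q_{k+1}$ and $\sum_{k\ge m} 1/q_k$ sharp enough, uniformly over all irrational $\alpha$ and all admissible Ostrowski digits, to land inside the clean bound $3\max(1,|\delta_{m+1}|)/q_{m+1}$ — in particular handling the case where many intermediate $\delta_{k+1}$ are as large as $a_{k+1}$ without the errors accumulating. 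Everything else is a routine substitution and summation of a geometric series.
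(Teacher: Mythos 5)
The paper does not actually prove this lemma; it is quoted from Rockett--Sz\"usz and a companion preprint, so there is no internal proof to compare against. Your setup is the right one: writing $n\alpha-\sum_{k=0}^{M}c_{k+1}p_k=\sum_{k=0}^{M}c_{k+1}D_k$, subtracting $\gamma=\sum_{k\ge 0}b_{k+1}D_k$, and using the hypothesis to reduce everything to bounding $\delta_{m+1}D_m+\sum_{k\ge m+1}c_{k+1}D_k-\sum_{k\ge m+1}b_{k+1}D_k$ (with $c_{k+1}:=0$ for $k>M$; note also that $M\ge m$ is automatic, since $c_{M+1}\ne 0$).

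The gap is exactly the one you flag at the end: the estimates you propose do not deliver the constant $3$. Bounding $|\delta_{k+1}|\le a_{k+1}$ and then $a_{k+1}|D_k|\le a_{k+1}/q_{k+1}\le 1/q_k$ leaves you with $\sum_{k>m}1/q_k$, and in the worst case (all later partial quotients equal to $1$, $q_m$ small compared to $q_{m+1}$) this sum is about $3.36/q_{m+1}$ --- the reciprocal Fibonacci constant appears --- so together with the $k=m$ term you only get roughly $4.4\max(1,|\delta_{m+1}|)/q_{m+1}$. The geometric decay is real but not fast enough at the start, and the hypothesis $m\ge 4$ does not rescue it, because the ratios $q_{k}/q_{k+1}$ can stay near $1$ for arbitrarily long stretches. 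The fix is the exact identity $|D_{k-1}|=a_{k+1}|D_k|+|D_{k+1}|$ (from \eqref{eqn:cfprop1} and the alternating signs of the $D_k$), which makes the sum telescope:
\begin{equation*}
\sum_{k\ge m+1}a_{k+1}|D_k|=\sum_{k\ge m+1}\bigl(|D_{k-1}|-|D_{k+1}|\bigr)=|D_m|+|D_{m+1}|\le\frac{2}{q_{m+1}}.
\end{equation*}
Equivalently, each of $\sum_{k\ge m+1}c_{k+1}D_k$ and $\sum_{k\ge m+1}b_{k+1}D_k$ is at most $|D_m|\le 1/q_{m+1}$ in absolute value, because the positive and negative terms telescope separately. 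Adding $|\delta_{m+1}|\,|D_m|\le|\delta_{m+1}|/q_{m+1}$ then gives $(|\delta_{m+1}|+2)/q_{m+1}\le 3\max(1,|\delta_{m+1}|)/q_{m+1}$, which is the stated bound. With that one replacement your argument is complete; with your cruder bound you would still prove the lemma with a larger universal constant, which would in fact suffice for every use the paper makes of it.
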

Finally, to simplify some formulas we follow the notation in \cite{RockSzus1992} to define, for fixed irrational $\alpha$ and for $k\ge 0$,
\[\zeta_k=[a_k;a_{k+1},\ldots]\quad\text{and}\quad\xi_k=\frac{q_{k-1}}{q_k}.\]

\subsection{Special intervals and blocks of gaps}
Now we present the definition of special intervals. Throughout our discussion we assume that $\alpha$ is an irrational real number in the unit interval, and we also think of $\alpha$ as an element of $\R/\Z.$ For each integer $m\in\N$ we define $\A (m)\subseteq\Z$ to be the set of non-negative integers $n$ with Ostrowski expansions of the form
\[n=\sum_{k=m}^\infty c_{k+1}q_k.\]
Then, for each $m\in\N$ and for each $\gamma\in\R/\Z$ we define a subset $\J(m,\gamma)$ of $\R/\Z$ by
\[\J(m,\gamma)=\gamma+\overline{\{n\alpha : n\in\A(m)\}}.\]
It is not difficult to see, in light of Lemmas \ref{lem:realcfexp} and \ref{lem:inhomappbnd}, that each set $\J(m,\gamma)$ is an interval in $\R/\Z$, and these intervals are what we refer to as special intervals. In this section we will begin to see why they merit this name.

We wish to study the collection of integers $n$ for which $n\alpha$ lies in a particular special interval. In order to do this first we will investigate the structure of the sets $\A(m)$. We write each set $\A(m)$ as
\[\A(m)=\{n_i(m)\}_{i\ge 0},~\text{with}~n_i(m)<n_{i+1}(m)~\text{for all}~i,\]
and we claim that this also orders the elements of $\A(m)$ lexicographically according to the digits in their Ostrowski expansion. In fact, for any two integers $n,n'\in\N$ with Ostrowski digits $\{c_{k+1}\}$ and $\{c_{k+1}'\}$, respectively, we have that $n<n'$ if and only if there exists an $m\ge 0$ so that $c_{k+1}=c'_{k+1}$ for all $0\le k<m$, and such that $c_{m+1}<c'_{m+1}$. If this is not obvious to the reader, it follows from the observation that the Ostrowski expansion in Lemma \ref{lem:intcfexp} (which is unique) can be obtained by using the greedy algorithm, successively choosing the largest possible value of the largest possible digit at each step.

Suppose that $m$ is chosen, and for each $i\ge 1$ let
\[g_i=n_{i}(m)-n_{i-1}(m).\]
If the Ostrowski expansion of $n_i$ (we will suppress the dependence on $m$) has digits $\{c_{k+1}\}_{k\ge 0}$ then, since the sequence $\{n_i\}$ is ordered lexicographically, the number $g_{i+1}$ will equal $q_m$ if $c_{m+1}<a_{m+1}$ and $q_{m-1}$ if $c_{m+1}=a_{m+1}$. To fully capture the pattern of the sequence $\{g_i\}$ we define a sequence of words $\{B_i\}_{i\ge 0}$ on the letters $1$ and $2$, by stipulating that $B_i$ encodes, in order, the occurrences of $q_m$ and $q_{m-1}$ (represented by the letters $1$ and $2$, respectively) in the sequence $\{g_1,\ldots ,g_{M(i)}\}$, where $M(i)$ is the integer satisfying
\[n_{M(i)}=q_{m+i}.\]
In other words, for $i\ge 0$, the block $B_i$ represents a rule which, when read from left to right, tells us the sequence of increments necessary to step through each element of $\{n_i\}$ in increasing order, starting from $0$ and ending at $q_{m+i}$. To give some examples, we have that $B_0=1$, that
\[B_1=\overbrace{1\cdots 1}^{a_{m+1}}2,\]
and that
\[B_2=\underbrace{(\overbrace{1\cdots 1}^{a_{m+1}}2)\cdots (\overbrace{1\cdots 1}^{a_{m+1}}2)}_{a_{m+2}}1.\]
From the recurrence relation (\ref{eqn:cfprop1}) we see that in order to reach $q_{m+i}$ we must apply the sequence of gaps encoded in $B_{i-1}$, $a_{m+i}$ times, and then we must apply the sequence of gaps encoded in $B_{i-2}$ one time. This means that for $i\ge 2$,
\begin{equation}\label{eqn.BlockRecursion}
B_i=\overbrace{B_{i-1}\cdots B_{i-1}}^{a_{m+i}}B_{i-2}.
\end{equation}
Now for each $i\ge 0$ define $r_i/s_i$ to be the reduced rational given by
\[\frac{r_i}{s_i}=[0;a_{m+1},\ldots ,a_{m+i}],\]
with $s_i>0$ and $r_0/s_0$ taken to be $0/1$. Note that the fraction $r_i/s_i$ is the $i$th principal convergent to $\{\zeta_m\}$. The number of $1$'s which occur in $B_0$ equals $s_0$, and the number of $1$'s which occur in $B_1$ equals $s_1$, so it follows from (\ref{eqn:cfprop1}) and (\ref{eqn.BlockRecursion}) that
\begin{equation}\label{eqn.OneCount}
\#\{1\text{'s in }B_i\}=s_i.
\end{equation}
Similarly the number of $2$'s in $B_0$ equals $r_0$ and the number of $2$'s in $B_1$ equals $r_1$, so it follows from (\ref{eqn:cfprop1}) and (\ref{eqn.BlockRecursion})
\begin{equation}\label{eqn.TwoCount}
\#\{2\text{'s in }B_i\}=r_i.
\end{equation}
We conclude this subsection with a proof of the following theorem.
\begin{theorem}\label{thm.GapCount}
There exists a universal constant $K>0$ such that, for all $m\ge 1$ and for all $M\ge 0$,
\[\left|M-n_M(m)\cdot\left(\frac{1+\{\zeta_m\}}{q_m(1+\{\zeta_m\}\xi_m)}\right)\right|\le K.\]
\end{theorem}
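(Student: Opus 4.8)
The plan is to count, for a given $M$, how many of each letter ($1$'s and $2$'s) appear among the first $M$ gaps $g_1,\ldots,g_M$, and to relate this count to $n_M(m)$. The quantity $n_M(m)$ is exactly the sum of those first $M$ gaps, namely $n_M(m) = q_m\cdot\#\{1\text{'s among }g_1,\ldots,g_M\} + q_{m-1}\cdot\#\{2\text{'s among }g_1,\ldots,g_M\}$, since each gap is either $q_m$ (a ``1'') or $q_{m-1}$ (a ``2''). So if we write $L_1(M)$ and $L_2(M)$ for these two counts, then $M = L_1(M)+L_2(M)$ and $n_M(m) = q_m L_1(M) + q_{m-1}L_2(M)$. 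The claim is then equivalent to the assertion that the ratio $L_1(M)/n_M(m)$ and $L_2(M)/n_M(m)$ are, up to $O(1/n_M(m))$ error, the constants one expects, so that $M/n_M(m)$ converges to the stated limit with a bounded absolute error after multiplying through by $n_M(m)$.

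First I would establish the asymptotics for a \emph{complete} block. By \eqref{eqn.OneCount} and \eqref{eqn.TwoCount}, the word $B_i$ contains $s_i$ ones and $r_i$ twos, and it steps from $0$ to $q_{m+i}$; thus $q_{m+i} = q_m s_i + q_{m-1} r_i$ and $|B_i| = s_i + r_i$. Since $r_i/s_i$ is the $i$-th convergent to $\{\zeta_m\}$, we have $r_i/s_i \to \{\zeta_m\}$ with the standard error $|r_i - s_i\{\zeta_m\}| \le 1/s_{i+1} \le 1$. Combining these relations, $|B_i|/q_{m+i} = (s_i+r_i)/(q_m s_i + q_{m-1} r_i)$, and substituting $r_i = s_i\{\zeta_m\} + O(1)$ together with $q_{m-1}/q_m = \xi_m$ gives $|B_i|/q_{m+i} \to (1+\{\zeta_m\})/(q_m(1+\{\zeta_m\}\xi_m))$ — this is precisely the constant in the statement. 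So the theorem holds with an $O(1)$ error whenever $M = |B_i|$ for some $i$ (equivalently, whenever $n_M(m) = q_{m+i}$), with the error coming entirely from the $O(1)$ discrepancy $|r_i - s_i\{\zeta_m\}|$ amplified by factors that one checks remain bounded independent of $m$ and $i$.

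The main work is handling general $M$, i.e. the case where $g_1,\ldots,g_M$ is an initial segment of some $B_i$ that does not end on a block boundary. Here I would use the recursive structure \eqref{eqn.BlockRecursion}: any prefix of $B_i$ is a concatenation of some number (at most $a_{m+i}$) of complete copies of $B_{i-1}$, followed by a prefix of $B_{i-1}$ (or of $B_{i-2}$). Peeling off complete copies of $B_{i-1}$ introduces \emph{no} new error, because for a complete block the count $L_1$, $L_2$ and the partial sum $n$ scale exactly, and the ratio error is the single $O(1)$ term $|r_{i-1} - s_{i-1}\{\zeta_m\}|$, which does not accumulate when we add several copies (the numerators and denominators of the defect both scale by the same integer multiple). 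Recursing down the index $i$, we are reduced after finitely many steps to a prefix of $B_1$ or $B_2$, and those are short words whose contribution to $M - n_M(m)\cdot(\text{const})$ is trivially $O(1)$. The one point requiring care — and the step I expect to be the main obstacle — is verifying that the constants hidden in ``$O(1)$'' really are absolute, uniform in $m$; this needs the bound $1 \le \zeta_k$ and $0 < \xi_k < 1$ (so that denominators like $1+\{\zeta_m\}\xi_m$ are bounded away from $0$ and $\infty$), together with the convergent estimate $|s_i r_j - s_j r_i| = 1$ for consecutive convergents, to control the error terms cleanly. Once that uniformity is in hand, summing the contributions along the recursion and invoking the geometric-type decay of the block lengths relative to $n_M(m)$ yields the uniform constant $K$.
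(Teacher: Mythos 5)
Your overall plan is the paper's own: write the length-$M$ prefix of the gap word as a concatenation of $d_{i_k}$ complete copies of $B_{i_k}$ over a strictly decreasing sequence of levels $i_1>i_2>\cdots\ge -1$ (obtained by exactly the recursive peeling you describe via (\ref{eqn.BlockRecursion})), record $M=\sum_k d_{i_k}(s_{i_k}+r_{i_k})$ and $n_M(m)=\sum_k d_{i_k}(s_{i_k}q_m+r_{i_k}q_{m-1})$, and reduce the theorem to a uniform bound on the absolute defect $\left|L_2-L_1\{\zeta_m\}\right|$ with $L_1=\sum_k d_{i_k}s_{i_k}$, $L_2=\sum_k d_{i_k}r_{i_k}$. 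The place where your accounting goes wrong is the claim that peeling off $d$ complete copies of $B_{i-1}$ ``introduces no new error'' because the single-copy defect $|r_{i-1}-s_{i-1}\{\zeta_m\}|$ ``does not accumulate.'' Invariance of the ratio $r_{i-1}/s_{i-1}$ is not the relevant point: the quantity that must stay bounded is the absolute defect, and $d$ copies contribute $d\,|r_{i-1}-s_{i-1}\{\zeta_m\}|$, i.e.\ the error does accumulate linearly within a level. Worse, the number of levels in your recursion is not bounded (it is comparable to $i_1$, hence grows like $\log n_M$), so a contribution that is merely ``a single $O(1)$ term per level'' would only give a bound of order $\log M$, not a universal constant $K$.

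The repair is precisely the estimate in the paper's proof. Since $d_{i_k}\le a_{m+i_k+1}+1$ while, by (\ref{eqn:D_kineq}) applied to the convergents $r_j/s_j$ of $\{\zeta_m\}$, one has $|r_{i_k}-s_{i_k}\{\zeta_m\}|\le 1/s_{i_k+1}\le 1/(a_{m+i_k+1}s_{i_k})$, the total contribution of level $i_k$ is at most $2/s_{i_k}$; summing over the strictly decreasing levels gives a universal constant (the paper gets $10$) because the $s_j$ grow at least as fast as the Fibonacci numbers, with the two lowest levels handled by the crude bound $d_{i_K}|D_{i_K}|+d_{i_{K-1}}|D_{i_{K-1}}|\le 2$. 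You have both ingredients on the page --- the sharp convergent bound $1/s_{i+1}$ and the ``geometric-type decay'' you invoke at the end --- but they must be multiplied together per level, not invoked separately; once you do that, your argument coincides with the paper's.
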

\begin{proof}
We encode the entire sequence $\{g_i\}$ into an infinite word with the letters $1$ and $2$, which we denote as $B_\infty$. In addition to the quantities $\{B_i\}$ and $r_i/s_i$, which were defined for $i\ge 0$, let us define the block $B_{-1}=2$ and the integers $r_{-1}=1$ and $s_{-1}=0$. Then the recursion (\ref{eqn.BlockRecursion}) holds for $i\ge 1$ and the formulas (\ref{eqn.OneCount}) and (\ref{eqn.TwoCount}) hold for $i\ge 0$.

For any $i\ge 0$, we can encode $B_\infty$ as a word in the blocks $B_i$ and $B_{i-1}$, and this encoding will have the property that no two $B_{i-1}$'s ever appear consecutively. This can be verified by considering what we have said in the previous section about Ostrowski expansion. Starting from $n_0$ we can partition the sequence $\A(m)$ into ordered subsets of consecutive elements which end at integers for which the Ostrowski digit $c_{m+i+1}$ has just been incremented. Each of these subsets corresponds to the block $B_i$, and this plan will continue until we reach the end of a subset where the final integer has $c_{m+i+1}=a_{m+i+1}$. Then we choose the next subset in our partition to be one corresponding to the block $B_{i-1}$. The integer at the end of this block will have $c_{m+i+1}=0$, allowing us to begin again with subsets corresponding to $B_i$ blocks.

Now suppose that $M>1$ and let $W_M$ denote the prefix of $B_\infty$ of length $M$. Choose $i_1$ to be the largest integer with the property that $B_{i_1}$ is a prefix of $W_M$. Then, in the encoding of $B_\infty$ with respect to $B_{i_1}$ and $B_{i_1-1}$, the number of complete $B_{i_1}$ blocks which are completely contained in $W_M$ is at most $a_{m+i_1+1}$. This is clear because, in light of (\ref{eqn.BlockRecursion}), if there were more than this number of complete $B_{i_1}$ blocks then there would have been a complete $B_{i_1+1}$ block contained in $W_M$. Now let the number of complete $B_{i_1}$ blocks contained in $W_M$ be $d_{i_1}$, and write
\[W_M=\underbrace{B_{i_1}\cdots B_{i_1}}_{d_{i_1}}W_M'.\]
Based on what we said in the previous paragraph, the word $W_M'$ is either properly contained in a $B_{i_1}$ or $B_{i_1-1}$ block, or it has the form
\begin{equation}\label{eqn.WordDecomp1}
W_M'=B_{i_1-1}W_M'',
\end{equation}
with $W_M''$ properly contained in a $B_{i_1}$ block.

If $W_M'$ is properly contained in a $B_{i_1}$ or $B_{i_1-1}$ block then we may apply the same argument as before, this time choosing $i_2$ to be the largest integer with the property that $B_{i_2}$ is a prefix of $W_M'$, in the encoding of $B_\infty$ with respect to $B_{i_2}$ and $B_{i_2-1}$. It follows that $i_2<i_1$ and as before we will have that the number of complete $B_{i_2}$ blocks contained in $W_M'$, which we call $d_{i_2}$, is at most $a_{m+i_2+1}$.

If $W_M'$ has the form (\ref{eqn.WordDecomp1}) then we take $i_2=i_1-2$ and we apply our argument to $W_M''$. If $W_M''$ has a complete $B_{i_1-1}$ block as a prefix then we let $d_{i_2}$ be one more than the number of $B_{i_2}$ blocks in $W_M''$. If $W_M''$ does not have a complete $B_{i_1-1}$ as a prefix then it must be properly contained in a $B_{i_2}$ block. In this case we let $d_{i_2}=1$ and we choose $i_3$ to be the largest integer with the property that $W_M''$ contains a prefix of $B_{i_3}$.

Continuing in this way we obtain an encoding of $W_M$ of the form
\[W_M=(\underbrace{B_{i_1}\cdots B_{i_1}}_{d_{i_1}})(\underbrace{B_{i_2}\cdots B_{i_2}}_{d_{i_2}})\cdots (\underbrace{B_{i_K}\cdots B_{i_K}}_{d_{i_K}}),\]
with $i_1>\cdots >i_K\ge-1$ and $d_{i_k}\le a_{m+i_k+1}+1$ for all $k$ with $i_k>-1$. If $i_K=-1$ then $d_{i_K}=1$. Counting the number of occurrences of the letters $1$ and $2$ in each of these blocks gives
\begin{equation}\label{eqn.MBlockRep}
M=|W_M|=\sum_{k=1}^Kd_{i_k}(s_{i_k}+r_{i_k}),
\end{equation}
while counting them with the weights $q_m$ and $q_{m-1}$ gives
\begin{equation}\label{eqn.n_MBlockRep}
n_M=\sum_{k=1}^Kd_{i_k}(s_{i_k}q_m+r_{i_k}q_{m-1}).
\end{equation}
Now notice that
\begin{align}\label{eqn.Zeta_mApprox1}
\left|\left(\sum_{k=1}^Kd_{i_k}s_{i_k}\right)\{\zeta_m\}-\left(\sum_{k=1}^Kd_{i_k}r_{i_k}\right)\right|\le \sum_{k=1}^Kd_{i_k}|D_{i_k}(\{\zeta_m\})|
\end{align}
By (\ref{eqn:D_kineq}) the right hand side is bounded above by
\begin{align*}
d_{i_K}|D_{i_K}|+d_{i_{K-1}}|D_{i_{K-1}}|+\sum_{k=1}^{K-2}\frac{a_{m+i_k+1}+1}{s_{i_k+1}}\le 2+\sum_{k=1}^{K-2}\frac{2}{s_{i_k}}.
\end{align*}
Since the quantities $s_i$ must grow at least as fast as the Fibonacci sequence, it is not difficult to show that the right hand side here is bounded above by $10$. Returning to (\ref{eqn.Zeta_mApprox1}) this gives
\begin{equation}\label{eqn.Zeta_mApprox2}
\left|\{\zeta_m\}-\frac{\sum_{k=1}^Kd_{i_k}r_{i_k}}{\sum_{k=1}^Kd_{i_k}s_{i_k}}\right|\le\frac{10}{\sum_{k=1}^Kd_{i_k}s_{i_k}} \ll\frac{1}{M},
\end{equation}
the final inequality coming from (\ref{eqn.MBlockRep}) and the fact that $s_{i_k}>r_{i_k}$, except possibly when $i_k=-1$ (this possibility contributes at most $1$ to the sum, so can be covered by the implied constant).

Now combining (\ref{eqn.MBlockRep}) and (\ref{eqn.Zeta_mApprox2}) gives the formula
\[M=\left(\sum_{k=1}^Kd_{i_k}s_{i_k}\right)\left(1+\{\zeta_m\}+O\left(\frac{1}{M}\right)\right),\]
and rearranging this gives
\[\sum_{k=1}^Kd_{i_k}s_{i_k}=\frac{M}{1+\{\zeta_m\}}+O(1).\]
Returning to (\ref{eqn.n_MBlockRep}) we have that
\begin{align}
n_M&=\left(\sum_{k=1}^Kd_{i_k}s_{i_k}\right)\left(q_m+q_{m-1}\frac{\sum_{k=1}^Kd_{i_k}r_{i_k}}{\sum_{k=1}^Kd_{i_k}s_{i_k}}\right)\nonumber\\
&=\left(\frac{M}{1+\{\zeta_m\}}+O(1)\right)\left(q_m+q_{m-1}\{\zeta_m\}+O\left(\frac{q_{m-1}}{M}\right)\right)\nonumber\\
&=M\left(\frac{q_m(1+\{\zeta_m\}\xi_m)}{1+\{\zeta_m\}}\right)+O(q_m).\label{eqn.n_MFormula}
\end{align}
By rearranging this formula, we arrive at the statement of the theorem. As always in this paper, all implied constants in this proof are universal.
\end{proof}

\subsection{Return times to special intervals}
Our analysis of blocks allows us to prove results about return times of $n\alpha$ to special intervals. First of all, from Theorem \ref{thm.GapCount} we deduce the following result.
\begin{lemma}\label{lem.IntervalLength}
For any $m\in\N$ and $\gamma\in\R/\Z$ we have that
\[|\J(m,\gamma)|=\frac{(1+\{\zeta_m\})}{q_m(1+\{\zeta_m\}\xi_m)}.\]
\end{lemma}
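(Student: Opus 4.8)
The plan is to reduce to $\gamma=0$, realize $\J(m,0)$ as the closure of a set of canonically lifted orbit points $n\alpha$ with $n\in\A(m)$, compute its two endpoints by extremizing an Ostrowski‑type series, and finally rewrite the resulting length in the stated closed form via routine continued‑fraction identities. Since $\J(m,\gamma)=\gamma+\J(m,0)$, the length does not depend on $\gamma$, so it suffices to treat $J:=\overline{\{n\alpha\bmod 1:n\in\A(m)\}}$, which we already know to be an interval containing $0$. The case $m=1,\ a_1=1$ is degenerate: the constraint $0\le c_1<a_1$ then forces $\A(1)=\N$, so $J=\R/\Z$ has length $1$, which agrees with the formula (one checks directly that $\frac{1+\{\zeta_1\}}{q_1(1+\{\zeta_1\}\xi_1)}=1$ exactly when $a_1=1$). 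So I will assume we are not in that case; the computation below confirms $|J|<1$.

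The key observation is that for $n=\sum_{k\ge m}c_{k+1}q_k\in\A(m)$, writing $q_k\alpha=p_k+D_k$ gives $n\alpha\equiv\sum_{k\ge m}c_{k+1}D_k\pmod 1$, so each $n\in\A(m)$ carries the natural real lift $\tilde x_n=\sum_{k\ge m}c_{k+1}D_k$. I claim $|J|=B-A$, where $B$ and $A$ are the supremum and infimum of $\sum_{k\ge m}c_{k+1}D_k$ over all digit sequences obeying the inequalities and carry rule of Lemma \ref{lem:intcfexp}. Indeed every $\tilde x_n$ lies in $[A,B]$, and since the computation below gives $B-A<1$, the set $\{n\alpha\bmod 1:n\in\A(m)\}$ lies in an arc of length $B-A$, whence $|J|\le B-A$; conversely $B$ and $A$ are limits of the $\tilde x_n$ obtained by truncating the extremal sequences, so the closed connected set $J$ contains points arbitrarily close to both endpoints of that arc and therefore equals it. To find $B$ and $A$: since $D_k=(-1)^k|D_k|$ with $|D_k|$ strictly decreasing, $\sum_{k\ge m}c_{k+1}D_k$ is maximized termwise by taking $c_{k+1}=a_{k+1}$ on those $k\ge m$ with $D_k>0$ and $c_{k+1}=0$ on the rest — a sequence that respects the carry rule automatically, its non‑zero digits being two indices apart — and, using $a_{k+1}D_k=D_{k+1}-D_{k-1}$ from (\ref{eqn:cfprop1}), this sum telescopes to $-D_{m-1}$; the minimum is obtained symmetrically and equals $-D_m$. (When $m$ is odd the two telescoping sums exchange roles.) Hence
\[|J|=|(-D_{m-1})-(-D_m)|=|D_m|+|D_{m-1}|=|q_m\alpha-p_m|+|q_{m-1}\alpha-p_{m-1}|,\]
which is indeed $<1$ outside the degenerate case.

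It remains to rewrite $|D_m|+|D_{m-1}|$ in the stated form. Here I would use the standard identity $|q_j\alpha-p_j|=(q_j\zeta_{j+1}+q_{j-1})^{-1}$ together with $\zeta_m=a_m+1/\zeta_{m+1}$ (so $\{\zeta_m\}=1/\zeta_{m+1}$) and the resulting $q_{m-1}\zeta_m+q_{m-2}=(q_m\zeta_{m+1}+q_{m-1})/\zeta_{m+1}$; these give $|D_m|+|D_{m-1}|=(1+\zeta_{m+1})/(q_m\zeta_{m+1}+q_{m-1})$, and substituting $\zeta_{m+1}=1/\{\zeta_m\}$ and $\xi_m=q_{m-1}/q_m$ turns this into $\frac{1+\{\zeta_m\}}{q_m(1+\{\zeta_m\}\xi_m)}$, as claimed. (The same value can alternatively be extracted from Theorem \ref{thm.GapCount}, which yields the asymptotic density of $\A(m)$ in $\N$, combined with equidistribution of $\{n\alpha\}$ inside $J$; the argument above has the advantage of being self‑contained.)

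The step I expect to be the main obstacle is the assertion $|J|=B-A$ (rather than something smaller): one has to be careful that reducing the lifts modulo $1$ causes no wraparound — which is controlled by the a priori bound that all $\tilde x_n$ lie in an interval shorter than $1$ — and that connectedness of $J$ genuinely forces it to fill the whole arc. The sign bookkeeping in identifying the extremal digit sequences and in verifying that they satisfy the carry rule is the other delicate point, though it is routine.
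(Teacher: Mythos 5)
Your proof is correct, but it takes a genuinely different route from the paper's. The paper first establishes the identity $\A(m)=\{n\ge 0: n\alpha\in\J(m,0)\}$ by showing that the intervals $J(m,c)$ tile $\R/\Z$ (and then recursively tile $J(m-1,0)$, etc.), and then obtains the length of $\J(m,0)$ indirectly: Theorem \ref{thm.GapCount} gives the asymptotic density of $\A(m)$ in $\N$, and equidistribution of $\{n\alpha\}$ forces that density to equal $|\J(m,0)|$. You instead compute the two endpoints of $\J(m,0)$ directly by extremizing the Ostrowski-type series $\sum_{k\ge m}c_{k+1}D_k$, obtaining $|\J(m,0)|=|D_m|+|D_{m-1}|$ via telescoping, and then convert to the stated form with the identity $|D_j|=(q_j\zeta_{j+1}+q_{j-1})^{-1}$; your sign bookkeeping, the verification that the extremal digit sequences obey the carry rule, the treatment of the degenerate case $m=1$, $a_1=1$, and the final algebra all check out. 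Amusingly, the same telescoping evaluation ($a_2D_1+a_4D_3+\cdots=-D_0$, etc.) appears in the paper's proof too, but there it is used to locate the endpoints of the tiles $J(m,c)$ rather than of $\J(m,0)$ itself. What each approach buys: yours is self-contained and shorter for this particular lemma (no appeal to Theorem \ref{thm.GapCount} or to equidistribution), and it yields the clean intermediate form $|\J(m,0)|=\|q_m\alpha\|+\|q_{m-1}\alpha\|$; the paper's route earns its keep because the return-time identity (\ref{eqn.ReturnTimesToJ}) it establishes along the way is exactly what is needed again in the proof of Theorem \ref{thm.ReturnTimes}, so proving it here does double duty. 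Both arguments quietly rely on the fact, asserted just before the lemma, that $\J(m,\gamma)$ is connected; in your write-up this is the step turning ``contains points near both endpoints of an arc of length $B-A<1$'' into ``equals that arc,'' and it would be worth a sentence (it follows from the same tiling/endpoint computations).
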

\begin{proof}
We assume without loss of generality that $\gamma=0$, and we claim that
\begin{equation}\label{eqn.ReturnTimesToJ}
\A(m)=\{n\ge 0:n\alpha\in \J (m,0)\}.
\end{equation}
Let us write $\J=\J(m,0)$. It is clear from the definitions that
\[\A(m)\subseteq\{n\ge 0:n\alpha\in \J\},\]
so we only need to prove the reverse inclusion. For each pair of integers $m$ and $c$ satisfying $m\ge 1$, $0\le c< a_1$ when $m=1$, and $0\le c\le a_m$ when $m\ge 2$, we define $A(m,c)$ to be the collection of non-negative integers $n$ with Ostrowski expansions of the form
\[n=cq_{m-1}+\sum_{k=m}^{\infty}c_{k+1}q_k,\]
and we define an interval $J(m,c)$ in $\R/\Z$ by
\[J(m,c)=\overline{\{n\alpha:n\in A(m,c)\}}.\]
Now consider the cases when $m=1$. From (\ref{eqn:D_kineq}) we see that the endpoints of the interval $J(1,0)$ are
\[a_2D_1+a_4D_3+\cdots=(D_2-D_0)+(D_4-D_2)+\cdots=-D_0\]
and
\[a_3D_2+a_5D_4+\cdots=(D_3-D_1)+(D_5-D_3)+\cdots=-D_1.\]
Similarly, for $0< c< a_1$ the endpoints of the interval $J(1,c)$ are
\[cD_0+(a_2-1)D_1+a_4D_3+\cdots=(c-1)D_0-D_1\]
and
\[cD_0+a_3D_2+a_5D_4+\cdots=cD_0-D_1.\]
We have that $-D_0=-\alpha$ and $(a_1-1)D_0-D_1=1-\alpha$, and it follows that these intervals cover $\R/\Z$, with the only overlaps being at their endpoints. The endpoints are not positive integer multiples of $\alpha$, and since $\J\subseteq J(1,0)$, this shows that any integer $n$ for which $n\alpha\in\J$ must have the first digit in its Ostrowski representation equal to $0$.

If $m=1$ then we are done, otherwise we move on to consider the intervals $J(2,c)$. By the same arguments as before, the intervals $J(2,c)$ cover $J(1,0)$, only overlapping at their endpoints (which are not positive integer multiples of $\alpha$). When $m\ge 2$ we have that $\J\subseteq J(2,0)$, which shows that any integer $n$ for which $n\alpha\in\J$ must the first two digits in its Ostrowski expansion equal to zero. Continuing in this way verifies (\ref{eqn.ReturnTimesToJ}).

The rest of the proof follows immediately from Theorem \ref{thm.GapCount}, together with the well known fact that for any irrational $\alpha$, the sequence $\{n\alpha\}_{n\in\N}$ is uniformly distributed modulo $1$.
\end{proof}
Now we are positioned to prove the following result which, as we mentioned in the introduction, is the key to what follows.
\begin{theorem}\label{thm.ReturnTimes}
There is a universal constant $K$ with the property that, for any $m\ge 1, \gamma\in\R/\Z$, and $N\in\N$,
\[\left|\#\{n\in\N : n\le N, n\alpha\in\J(m,\gamma)\}-N|\J(m,\gamma)|\right|\le K.\]
\end{theorem}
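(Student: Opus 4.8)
The plan is to reduce the general statement (arbitrary $N\in\N$ and arbitrary shift $\gamma\in\R/\Z$) to the special case $\gamma=0$ together with $N=n_M(m)$ for some $M$, which is exactly what Theorem~\ref{thm.GapCount} controls. First I would dispose of the shift: by Lemma~\ref{lem.IntervalLength} the length $|\J(m,\gamma)|$ is independent of $\gamma$, and the orbit $\{n\alpha:n\in\N\}$ is uniformly distributed, so the counting function $\#\{n\le N:n\alpha\in\J(m,\gamma)\}$ as $\gamma$ varies is a translate of the same function for $\gamma=0$; more precisely, the points $n\alpha$ landing in $\J(m,\gamma)$ are obtained from those landing in $\J(m,0)$ by a bounded-index reindexing, since consecutive return times to an interval of this length differ by at most a bounded amount. (This last point is where the block structure pays off: by the analysis in Section~\ref{sec.ReturnTimes}, the gaps between consecutive visits of $n\alpha$ to $\J(m,0)$ take only the two values $q_m$ and $q_{m-1}$ up to the combinatorics encoded in $B_\infty$, hence are comparable, and the same holds for any translate $\J(m,\gamma)$ because translating an interval does not change the first-return map's gap set for an irrational rotation up to a single exceptional gap.) So it suffices to bound $\big|\#\{n\in\N:n\le N,\ n\alpha\in\J(m,0)\}-N|\J(m,0)|\big|$ uniformly in $m$ and $N$.

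For $\gamma=0$ I would use the identification (\ref{eqn.ReturnTimesToJ}) from the proof of Lemma~\ref{lem.IntervalLength}, namely $\A(m)=\{n\ge0:n\alpha\in\J(m,0)\}$. Writing $\A(m)=\{n_i(m)\}_{i\ge0}$ in increasing order, for a given $N$ let $M=M(N)$ be the largest index with $n_M(m)\le N$. Then $\#\{n\le N:n\alpha\in\J(m,0)\}=M+1$, and since consecutive elements of $\A(m)$ differ by at most $q_m$, we have $0\le N-n_M(m)\le q_m$. Now apply Theorem~\ref{thm.GapCount}: it gives
\[
\Big|M-n_M(m)\cdot\frac{1+\{\zeta_m\}}{q_m(1+\{\zeta_m\}\xi_m)}\Big|\le K,
\]
and by Lemma~\ref{lem.IntervalLength} the coefficient of $n_M(m)$ is exactly $|\J(m,0)|$. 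Hence
\[
\big|M-N|\J(m,0)|\big|\le K+\big(N-n_M(m)\big)|\J(m,0)|\le K+q_m\cdot|\J(m,0)|,
\]
and the product $q_m|\J(m,0)|=\frac{1+\{\zeta_m\}}{1+\{\zeta_m\}\xi_m}$ is bounded above by a universal constant (both $\{\zeta_m\}\in(0,1)$ and $\xi_m\in(0,1)$, so this ratio lies between $1$ and $2$). Absorbing the $+1$ from $M+1$ versus $M$ into the constant completes the $\gamma=0$ case.

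The main obstacle is the shift step: making rigorous the claim that replacing $\J(m,0)$ by $\J(m,\gamma)$ changes the counting function by only a bounded amount. The honest way to do this is to note that both intervals have the same length $L=|\J(m,\gamma)|$ by Lemma~\ref{lem.IntervalLength}, so it suffices to show the \emph{general} bound $\big|\#\{n\le N:n\alpha\in I\}-N|I|\big|\le K$ for \emph{every} translate $I$ of $\J(m,0)$. For this I would reprove the block analysis with a general base point: the first-return map of the rotation by $\alpha$ to any interval $I$ of length $L$ is again an interval exchange of at most three intervals whose return-time gaps are $q_m$, $q_{m-1}$, and (on one exceptional piece) $q_m+q_{m-1}$, and the combinatorial word describing the itinerary is, up to a bounded-length prefix and suffix, a shift of the same word $B_\infty$; running the decomposition of $W_M$ into blocks $B_{i_k}$ exactly as in the proof of Theorem~\ref{thm.GapCount} then yields the same estimate with the same universal constant, enlarged by a fixed amount to account for the at most $O(1)$ extra or missing points near the two ends of the orbit segment and the single exceptional gap. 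Alternatively, and more cheaply, one can invoke the three-distance theorem to see directly that the gaps between successive visits of $\{n\alpha\}$ to any interval of length $L\asymp 1/q_m$ take at most three values, all $O(q_m)$, and then compare the visit times to $I$ and to $\J(m,0)$ by a bounded bijection; either route reduces everything to Theorem~\ref{thm.GapCount} and Lemma~\ref{lem.IntervalLength}, which supply the actual content.
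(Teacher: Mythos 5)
Your $\gamma=0$ argument is correct and is essentially the paper's: identify the visit set with $\A(m)$ via (\ref{eqn.ReturnTimesToJ}), take $M$ maximal with $n_M(m)\le N$, apply Theorem \ref{thm.GapCount}, and absorb $q_m|\J(m,0)|=O(1)$ into the constant. But you have correctly identified the crux --- arbitrary $\gamma$ --- and that step is not actually carried out; it rests on two assertions that are not justified and are not quite right as stated. First, the itinerary of visits to a translated interval is \emph{not} ``$B_\infty$ up to a bounded-length prefix and suffix'': for $\gamma=-\ell\alpha$ it is a \emph{shifted window} of $B_\infty$, i.e.\ the subword beginning at an arbitrary position $I$, and such a subword can fail to be a prefix of $B_\infty$ in an essential way. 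Consequently you cannot run ``the decomposition of $W_M$ into blocks exactly as in the proof of Theorem \ref{thm.GapCount}'': that proof decomposes only \emph{prefixes} of $B_\infty$ (always peeling blocks off the left end, starting from a complete $B_{i_1}$ prefix). The paper's proof has to do genuinely more work here: it writes $W_M=W'_{M^-}(B_{i_1}\cdots B_{i_1})W'_{M^+}$ and decomposes $W'_{M^-}$ from right to left, which is why the digit bounds degrade from $d_k\le a_{m+i_k+1}+1$ to $d_k\le 2a_{m+i_k+1}+1$. Second, your fallback via the three-distance/Slater theorem (return times to an arbitrary translate take at most three values $q_{m-1},q_m,q_{m-1}+q_m$, compare by a ``bounded bijection'') is a claim whose proof would be comparable in difficulty to the theorem itself, and a bijection between visit sequences with different gap alphabets does not by itself control the counting function uniformly in $m$.

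The paper's route around both problems is worth noting because it is cheap. For $\gamma=-\ell\alpha$ one has $n\alpha\in\J(m,\gamma)\iff(n+\ell)\in\A(m)$, so the count is $\#\{i:\ell\le n_i(m)\le N+\ell\}$ and one needs only the two-sided block decomposition of an arbitrary subword $W_M(I)$ of $B_\infty$, yielding (\ref{eqn.DifferenceOfReturnTimes}) with a universal implied constant. For arbitrary $\gamma$ there is no need to analyze translated intervals at all: since only the finitely many points $\{n\alpha:1\le n\le N\}$ are involved and $\{-\ell\alpha\}_{\ell\ge0}$ is dense in $\R/\Z$, one may replace $\gamma$ by a nearby $-\ell\alpha$ without changing the count, and the uniformity of the constants finishes the proof. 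These two reductions --- translate only by orbit points, then approximate --- are the missing ingredients in your sketch.
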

\begin{proof}
First we consider the case when $\gamma=-\ell\alpha$ for some $\ell\ge 0$. In this case we have, for any $n\in\N$, that $n\alpha\in\J(m,\gamma)$ if and only if $(n+\ell)\alpha\in\J(m,0)$. By (\ref{eqn.ReturnTimesToJ}) this condition is equivalent to $(n+\ell)\in\A(m)$, which shows that
\begin{equation}\label{eqn.NumberOfReturns}
\#\{1\le n\le N: n\alpha\in\J(m,\gamma)\}=\#\{i\ge 0:\ell\le n_{i}(m)\le N+\ell\}.
\end{equation}
Let $n_I(m)$ be the smallest element of $\A(m)$ satisfying $\ell\le n_I(m)$. Now we proceed as in the proof of Theorem \ref{thm.GapCount}, using much of the notation there and again suppressing the dependence on $m$.

For $M>1$ let $W_M=W_M(I)$ denote the subword of $B_\infty$ which begins at the $I^{\rm{th}}$ letter and extends to the $(I+M)^{\rm{th}}$ letter. Let $i_1$ be the largest integer for which $W_M$ contains a complete $B_{i_1}$ block, in the encoding of $B_\infty$ with respect to $B_{i_1}$ and $B_{i_1-1}$. Let $d_{i_1}$ be the number of $B_{i_1}$ blocks in this encoding which are completely contained in $W_M$ so that, as before, $d_{i_1}\le a_{m+i_1+1}$. Write
\[W_M=W_{M-}'\underbrace{B_{i_1}\cdots B_{i_1}}_{d_{i_1}}W_{M+}',\]
so that neither $W_{M^-}'$ nor $W_{M^+}'$ contains a $B_{i_1}$ block.

The block $W_{M^+}'$ may or may not have $B_{i_1-1}$ as a prefix, but in either case, applying the method in proof of Theorem \ref{thm.GapCount}, we know that we can encode it as a union of blocks of types $i_{2^+}>\cdots >i_{K^+}\ge-1$ using $d_{k^+}$ blocks of each type, with $d_{k^+}\le a_{m+(i_{k^+})+1}+1$ for all $k$ with $i_{k^+}>-1$, and with $d_{i_{K^+}}=1$ if $i_{K^+}=-1$.

The same type of argument applies to $W_{M^-}'$, if we encode it working from right to left. First of all, suppose that $W_{M^-}'$ has $B_{i_1-1}$ as a suffix, but that it is not a subword of $B_{i_1}$. Then it has the form
\[W_{M^-}'=W_{M^-}''B_{i_1-1},\]
with $W_{M^-}''$ properly contained in a $B_{i_1}$ block. In this case we take $i_{2^-}=i_1-1$ and, letting $d_{2^-}$ be the number of complete $B_{i_1-1}$ blocks contained in $W_{M^-}'$, we would have that $d_{2^-}\le a_{m+(i_{2^-})+1}$. This would lead to a decomposition of $W_{M^-}''$ of the form
\[W_{M^-}''=W_{M^-}'''\underbrace{B_{i_{2^-}}\cdots B_{i_{2^-}}}_{(d_{2^-})-1}B_{(i_{2^-})-1},\]
with $W_{M^-}'''$ contained as a proper sub-suffix of a $B_{i_{2^-}}$ block. Returning to the discussion of $W_{M^-}',$ the possibilities that we have not considered are covered in the cases when either $W_{M^-}'$ is already a complete block (in which case we are finished) or when it is not a complete block, but is a subword of a $B_{i_1}$ or $B_{i_1-1}$ block. In the latter case we apply the arguments above to choose $i_{2^-}<i_1$ and $d_{2^-}\le a_{m+(i_{2^-})+1}$, giving us an encoding of the form
\[W_{M^-}''=W_{M^-}'''\underbrace{B_{i_{2^-}}\cdots B_{i_{2^-}}}_{(d_{2^-})}B_{(i_{2^-})-1}.\]
Therefore we are guaranteed after our choice of $i_{2^-}$ to reach one of the two types of encodings above. It follows by induction that we can encode $W'_{M^-}$ as a union of blocks of types $i_{2^-}>\cdots >i_{K^-}\ge-1$ using $d_{k^-}$ blocks of each type, with $d_{k^-}\le a_{m+(i_{k^-})+1}$ for all $k$ with $i_{k^-}>-1$, and with $d_{i_{K^-}}=1$ if $i_{K^-}=-1$.

From the previous three paragraphs, we conclude that we can write $W_M$ as a union of blocks of types $i_{1}>\cdots >i_{K}\ge-1$ using $d_k$ blocks of each type, with $d_k\le 2a_{m+i_k+1}+1$ for all $k$ with $i_k>-1$, and with $d_{i_K}\le 2$ if $i_K=-1$. By exactly the same arguments as in Theorem \ref{thm.GapCount}, and by Lemma \ref{lem.IntervalLength}, we have that
\begin{align}
n_{I+M}-n_I&=M\left(\frac{q_m(1+\{\zeta_m\}\xi_m)}{1+\{\zeta_m\}}\right)+O(q_m)\nonumber\\
&=M|\J(m,\gamma)|^{-1}+O\left(|\J(m,\gamma)|^{-1}\right),\label{eqn.DifferenceOfReturnTimes}
\end{align}
where the implied constant does not depend on any of the parameters involved.

Now if the quantity in (\ref{eqn.NumberOfReturns}) is smaller than $3$ then let $M=2$. Otherwise let $M$ be the largest integer such that $n_{I+M}(m)\le N+\ell$. Then since
\[N=(N+\ell)-\ell=n_{I+M}-n_I+O(q_m),\]
we have that
\begin{align*}
N=M|\J(m,\gamma)|^{-1}+O\left(|\J(m,\gamma)|^{-1}\right).
\end{align*}
Rearranging this equation, we have proved that there is a universal constant $K$ such that
\[\left|M-N|\J(m,\gamma)|\right|\le K.\]
The difference between $M$ and the quantity in (\ref{eqn.NumberOfReturns}) is at most $2$, so this finishes the proof of this theorem, in the case when $\gamma=-\ell\alpha$ for some $\ell\ge 0$.

Since $\alpha$ is irrational the set $\{-\ell\alpha\}_{\ell\ge 0}$ is dense in $\R/\Z$. Therefore, given any $\gamma\in\R/\Z$ and $N\in\N$, we can choose $\ell\in\N$ so that
\[\#\{1\le n\le N : n\alpha\in\J(m,\gamma)\}=\#\{1\le n\le N : n\alpha\in\J(m,-\ell\alpha)\}.\]
All of our implied constants are uniform, so the proof of the theorem in its entirety follows immediately from what we have already shown.
\end{proof}

\section{Proofs of Theorems \ref{thm.BDDEquiv} and \ref{thm.BL equiv}}
We come to the proofs of our main theorems. The proof of Theorem \ref{thm.ReturnTimes} should be indicative of how the proof of our first theorem will proceed.
\begin{proof}[Proof of Theorem \ref{thm.BDDEquiv}]
Recall that, by our arguments in Section \ref{sec.InitialReduction}, it is enough to show that the set $Y'$ from (\ref{eqn.ReductionToY'}) is $BD$ to a lattice. First suppose that $\mc{S}$ is a single special interval, say $\J (m,\gamma)$, and suppose without loss of generality, by relabelling if necessary, that $\alpha_1$ is irrational (see the comment following (\ref{eqn.ReductionToY'})). For each $(k-2)-$tuple of integers $(n_2,\ldots ,n_{k-1})$ write
\[\left\{n_1\in\Z:\alpha_k+\sum_{i=1}^{k-1}n_i\alpha_{i}\in \mathcal{\J}~\mathrm{mod}~ 1\right\}=\{\ell_i(n_2,\ldots ,n_{k-1})\}_{i\in\Z},\]
with $\ell_i<\ell_{i+1}$ and $\ell_{-1}<0\le \ell_0$. Consider the map from $Y'$ to the lattice \[(|\J(m,0)|^{-1})\Z\times\Z^{k-2}\]
defined by
\[(\ell_i(n_2,\ldots ,n_{k-1}),n_2,\ldots ,n_{k-1})\mapsto \left(i\cdot |\J(m,0)|^{-1},n_2,\ldots ,n_{k-1}\right).\]
By (\ref{eqn.DifferenceOfReturnTimes}), together with the comments at the end of the proof of the previous theorem, this map is a BD bijection which moves each point by at most $K|\mc{J}|^{-1}$, for some absolute universal constant $K$. Finally, recalling that the argument in Section \ref{sec.InitialReduction} introduced a scaling factor which depended only on $V$, we conclude that the composition $f$ of the rotation and scaling maps from $Y$ to $Y'$, together with the BD map which we have just constructed, satisfies $(\ref{eqn.BDconstant}).$
\end{proof}
The result in Corollary \ref{cor.BDequiv} now follows from  \cite[Proposition 2.4]{HaynKellWeis2013}. The proof there also explains how, together with the map above, to construct an explicit BD map from $Y$ to a lattice. Finally, we conclude with the proof of our BL result.
\begin{proof}[Proof of Theorem \ref{thm.BL equiv}]
Suppose that $\mc{S}$ satisfies the hypotheses of the theorem, and write $\mc{S}$ as a disjoint union,
\[\mc{S}=\bigcup_{k=1}^\infty J_k,\]
where each $J_k$ is a union of $C_k\le C$ disjoint special intervals of the form $\J(m_k,\cdot),$ with $m_1<m_2<\cdots$. Suppose that $B\subseteq\R^{d}$ is a hypercube of side length $2^K$, for some $K\in\N$, and choose $\ell$ to be the smallest integer with the property that $q_{m_\ell-1}>2^K$. Then we have that $\ell\ll K$ and, furthermore, there is a constant $C'$, depending possibly on $C$, with the property that
\begin{equation}\label{eqn.TailOfUnion}
\bigcup_{k=\ell+1}^\infty J_k
\end{equation}
is contained in a union of at most $C'$ special intervals of the form $\J(m_\ell,\cdot)$. Both of these assertions follow from the fact that the integers $q_k$ grow at least geometrically, which together with Lemma \ref{lem.IntervalLength} puts an upper bound on the length of (\ref{eqn.TailOfUnion}) which is directly proportional to $C$ and $|\J(m_\ell,0)|$. Then, since the smallest gap in a $\J(m_\ell,\cdot)$ interval is $q_{m_\ell-1}$, in any collection of $2^K$ consecutive integers there are at most $C'$ integers $n$ for which $n\alpha\in\mc{S}$.

Using the notation introduced in the previous proof, the number of points in $Y'\cap B$ is equal to
\begin{align*}
\sum_{(n_2,\ldots n_{k-1})\in\Z^{k-2}}\#\{i\in\Z:(\ell_i(n_2,\ldots ,n_{k-1}),n_2,\ldots ,n_{k-1})\in Y'\}.
\end{align*}
For any fixed $(k-2)-$tuple of integers, if the summand above is non-zero then it is equal to
\begin{align*}
\sum_{k=1}^\infty\#\{i\in\Z:\ell_i(n_2,\ldots ,n_{k-1})\in J_k+\gamma\}
\end{align*}
for some $\gamma$ depending on all of the parameters involved. However, regardless of what $\gamma$ is, we can apply Theorem \ref{thm.ReturnTimes} to conclude that this is equal to
\begin{align*}
\sum_{k=1}^\ell(2^K|J_k|+O(1))+O(1)&=2^K\sum_{k=1}^\ell|J_k|+O(K)\\
&=2^K|\mc{S}|+O(K),
\end{align*}
where in the last line we are using the estimate
\[2^K\sum_{k=\ell+1}^\infty|J_k|\ll 2^K|J_\ell|\ll 1.\]
Altogether this gives that
\[\#(Y'\cap B)=2^{(k-1)K}|\mc{S}|+O(K2^{(k-2)K}).\]
Applying Theorem \ref{thm.BuragoKleiner} with $\lambda=|\mc{S}|$ then finishes the proof.
\end{proof}

\end{document}